\documentclass[11pt,a4paper]{article}

\usepackage[T1]{fontenc}
\usepackage{lmodern}
\usepackage[margin=1.08in]{geometry}
\usepackage{amsmath,amssymb,amsthm,mathtools}
\usepackage{bm}
\usepackage{booktabs}
\usepackage{longtable}
\usepackage{array}
\usepackage{graphicx}
\usepackage{xcolor}

\usepackage{microtype}
\usepackage{enumitem}
\usepackage[hidelinks]{hyperref}
\hypersetup{
 pdftitle={A computer-assisted counterexample to the planar Berenstein conjecture},
 pdfauthor={Matthew J. Colbrook, Siavash Sadeghi, and George Stepaniants},
 pdfsubject={Overdetermined Dirichlet eigenvalue problems and validated numerics},
 pdfkeywords={Berenstein conjecture, overdetermined eigenvalue problem, conformal mapping, disk polynomials, computer-assisted proof},
 bookmarksnumbered=true
}

\newtheorem{theorem}{Theorem}[section]
\newtheorem{proposition}[theorem]{Proposition}
\newtheorem{lemma}[theorem]{Lemma}
\newtheorem{corollary}[theorem]{Corollary}

\newtheorem{remark}[theorem]{Remark}

\newcommand{\D}{\mathbb D}
\newcommand{\T}{\mathbb T}
\newcommand{\R}{\mathbb R}
\newcommand{\C}{\mathbb C}
\newcommand{\PP}{\mathcal P}
\newcommand{\BB}{\mathcal B}
\newcommand{\WW}{\mathcal W}
\newcommand{\YY}{\mathcal Y}
\newcommand{\XX}{\mathcal X}
\newcommand{\ZZ}{\mathcal Z}
\newcommand{\Aop}{\mathcal A}
\newcommand{\KD}{K_{\!D}}
\newcommand{\Ree}{\operatorname{Re}}
\newcommand{\dd}{\,\mathrm d}
\newcommand{\norm}[1]{\left\lVert #1\right\rVert}

\title{A computer-assisted counterexample to the\\planar Berenstein conjecture}
\author{%
 Matthew J. Colbrook\thanks{Department of Applied Mathematics and Theoretical Physics, University of Cambridge, Cambridge, UK. \protect\\Email: \href{mailto:m.colbrook@damtp.cam.ac.uk}{\texttt{m.colbrook@damtp.cam.ac.uk}}.}
 \and
 Siavash Sadeghi\thanks{Department of Mathematics and Statistics, University of Reading, Whiteknights, Reading RG6 6AX, UK. \protect\\Email: \href{mailto:s.sadeghi@pgr.reading.ac.uk}{\texttt{s.sadeghi@pgr.reading.ac.uk}}.}
 \and
 George Stepaniants\thanks{Computing and Mathematical Sciences, California Institute of Technology, Pasadena, CA, USA. \protect\\Email: \href{mailto:gstepan@caltech.edu}{\texttt{gstepan@caltech.edu}}.}%
}
\date{9 August 2026}

\begin{document}
\maketitle

\begin{abstract}
Recent work of Colbrook and Stepaniants produced the first
counterexamples to the planar Pompeiu and Schiffer conjectures and
introduced the conformal fixed-disc, disk-polynomial, and validated-tail
machinery used here. By adapting this framework to the complementary Dirichlet
endpoint, we disprove the unrestricted planar Berenstein conjecture. Specifically, we construct
a bounded simply connected domain $\Omega$ with real-analytic Jordan
boundary, which is not a disc and for which there exist
$$
k\in(27.4381178838,27.4381198839)
$$
and a nonzero real-valued function $u\in C^\omega(\overline\Omega)$ satisfying
$$
 (\Delta+k^2)u=0\quad\text{in }\Omega,
 \qquad u=0,
 \qquad \partial_\nu u=\text{constant}\ne0
 \quad\text{on }\partial\Omega
$$
Thus the overdetermined Dirichlet--Neumann data do not characterize the disc without an additional sign assumption on $u$. The domain has dihedral symmetry of order $26$, but is neither a disc nor centrally symmetric, and the corresponding eigenfunction changes sign. Equivalently, its boundary arclength measure satisfies
$$
\widehat{\sigma_{\partial\Omega}}(k\omega)=0
 \qquad(\omega\in\mathbb S^1).
$$
The proof combines complex-analytic and spectral methods with a computer-assisted existence argument. After conformally transferring the problem to the unit disc, exact support identities and quantitative disk-polynomial estimates yield rigorous control of the infinite-dimensional tail. A Newton--Kantorovich argument then reduces existence to finitely many explicit inequalities, which are certified using interval arithmetic with directed rounding. The extension from the Pompeiu--Schiffer problem is not formal.
The earlier construction absorbs both boundary conditions into a single
inverse-Laplacian equation. At the Dirichlet endpoint considered here,
the nonzero Neumann datum forces the harmonic source modes to remain,
producing a coupled interior--boundary system involving the full
zero-Dirichlet inverse and its Neumann trace, together with a separate
sign-recovery problem.
\end{abstract}

\medskip
\noindent\textit{2020 Mathematics Subject Classification.} 35J05, 35N25, 35P05, 42B10, 47J05, 65G20.

\smallskip
\noindent\textit{Keywords and phrases.} Berenstein conjecture, overdetermined Dirichlet eigenvalue problem, constant normal derivative, conformal mapping, disk polynomials, validated numerics, computer-assisted proof.

\section{Introduction}

The Berenstein conjecture belongs to the classical Pompeiu--Schiffer circle of rigidity problems, which links rigid-motion integral transforms and Fourier zero sets to overdetermined Laplace eigenvalue problems. Its integral-geometric origins lie in Pompeiu's three papers of 1929~\cite{Pompeiu1929a,Pompeiu1929b,Pompeiu1929c}. Brown, Schreiber, and Taylor placed the problem in a Fourier--spectral framework in 1973, and Williams identified its equivalence, for domains homeomorphic to a ball, with the overdetermined Neumann problem now associated with Schiffer~\cite{BrownSchreiberTaylor1973,Williams1976}. For the Pompeiu--Schiffer problem, Garofalo and Segala used asymptotic methods for complexified Fourier integrals and techniques from univalent-function theory to establish the Pompeiu property for several classes of simply connected planar domains~\cite{GarofaloSegala1991New,GarofaloSegala1991Asymptotic,GarofaloSegala1993,GarofaloSegala1994}. Nigam, Siudeja, and Young combined validated finite-element eigenvalue bounds with a nodal-line argument to prove that the regular pentagon admits a nonconstant Neumann eigenfunction whose boundary trace is strictly positive~\cite{NigamSiudejaYoung2020}. For further history and references, see~\cite{ColbrookStepaniants2026}. These results concern the area measure of the domain, rather than the boundary arclength measure arising in the Berenstein problem.

The Dirichlet problem has an old history. In his 1954 treatment of Rayleigh's fixed-area first Dirichlet eigenvalue problem, Schiffer derived constancy of the normal derivative as the Euler--Lagrange condition at a stationary domain~\cite{Schiffer1954}. A 1957 lecture explicitly asked whether this condition for the first eigenfunction characterises the disc~\cite[p.~132]{Schiffer1957}. The unrestricted rigidity question for an arbitrary, possibly sign-changing, Dirichlet eigenfunction is generally attributed to Berenstein's 1980 paper~\cite{Berenstein1980} and remained open for more than four decades.

Let $\Omega\subset\R^n$ be a bounded domain with smooth boundary. In its modern form, the \emph{Berenstein conjecture} asks whether the existence of $k>0$ and a nonzero function $u$ satisfying
\begin{equation}\label{eq:berenstein-intro}
 (\Delta+k^2)u=0\quad\text{in }\Omega,
 \qquad
 u=0,
 \qquad
 \partial_\nu u=c
 \quad\text{on }\partial\Omega,
 \qquad c\ne0,
\end{equation}
forces $\Omega$ to be a ball. Multiplying $u$ by $c^{-1}$ reduces to $c=1$. Discs satisfy \eqref{eq:berenstein-intro}. If $B_R\subset\R^2$ is the disc of radius $R$ and $j_{0,q}$ is a positive zero of $J_0$, then
$
 u(r)=J_0(j_{0,q}r/R)
$
has zero boundary trace and constant nonzero normal derivative. The conjecture asserts that there are no other such domains. Serrin's moving-plane theorem proves this when the eigenfunction is strictly positive or strictly negative in the interior~\cite{Serrin1971}; consequently any counterexample must involve a higher, sign-changing Dirichlet eigenfunction.

Beyond the sign-definite regime, the principal rigidity results either involve many spectral parameters or require additional hypotheses. Berenstein proved that, in the Euclidean plane, solvability for infinitely many spectral parameters forces the domain to be a disc; Berenstein and Yang extended the corresponding rigidity to the Poincar\'e disc and then to arbitrary Euclidean dimension~\cite{Berenstein1980,BerensteinYang1982,BerensteinYang1987}. Dalmasso later obtained finiteness and nonexistence results for a broader problem with an affine source term, as well as low-eigenvalue rigidity for the Berenstein problem under additional hypotheses~\cite{Dalmasso2010,Dalmasso2014}. Liu obtained a criterion for the Berenstein problem involving the second interior normal derivative, while Kawohl and Lucia characterised related Schiffer-type problems in terms of the third, fourth, and fifth normal derivatives on planar analytic boundaries~\cite{Liu2007,KawohlLucia2020}.

For the linear eigenvalue problem, counterexamples were previously known on unbounded periodic domains~\cite{DaiZhang2023,Minlend2023}. For general semilinear equations, Ruiz showed that the positivity hypothesis in Serrin's theorem is essential by constructing a sign-changing solution on a bounded domain other than a ball for a suitable nonlinearity~\cite{Ruiz2025}. For bounded convex centrally symmetric planar domains with connected $C^{2,\varepsilon}$ boundary, a recent preprint proves rigidity whenever the corresponding eigenvalue exceeds an explicit domain-dependent threshold~\cite{DaiSunWeiZhang2025}. Our example is not centrally symmetric and therefore lies outside that class.

Colbrook and Stepaniants recently gave the first counterexamples to the
planar Pompeiu and Schiffer conjectures~\cite{ColbrookStepaniants2026}.
The conformal fixed-disc, disk-polynomial, and finite-block/infinite-tail
validation framework introduced there provides the conceptual and
computational foundation for the theorem below, which treats the
complementary Dirichlet problem. The extension is not formal: the
harmonic source modes can no longer be eliminated, so the single cubic
equation of~\cite{ColbrookStepaniants2026} is replaced by a coupled
interior--boundary system and a separate sign-recovery argument.
Subsequently, Cao-Labora and de Dios Pont gave an AI-assisted proof of
infinitely many Pompeiu--Schiffer counterexamples~\cite{cao2026counterexamples}.
Taken together, these works provide bounded planar counterexamples to
the complementary Neumann and Dirichlet rigidity conjectures.

Throughout, $C^\omega(\overline\Omega)$ denotes the restrictions to $\overline\Omega$ of real-analytic functions defined on an open neighbourhood of $\overline\Omega$. We also set $\D:=\{z\in\C:|z|<1\}$ and $\T:=\partial\D$.

\begin{theorem}[Main theorem]\label{thm:main}
There exist a bounded simply connected domain $\Omega\subset\R^2$ with real-analytic Jordan boundary, a number
\begin{equation}\label{eq:k-main}
 27.4381178838<k<27.4381198839,
\end{equation}
and a nonzero real-valued function $u\in C^\omega(\overline\Omega)$ such that
\begin{equation}\label{eq:main-pde}
 (\Delta+k^2)u=0\quad\text{in }\Omega,
 \qquad
 u=0,
 \qquad
 \partial_\nu u=1
 \quad\text{on }\partial\Omega.
\end{equation}
The domain is invariant under $D_{13}$, the dihedral group of order $26$, but is neither a disc nor centrally symmetric. Moreover,
\begin{equation}\label{eq:boundary-fourier-main}
 \widehat{\sigma_{\partial\Omega}}(k\omega)
 :=\int_{\partial\Omega}e^{-ik\omega\cdot x}\,\dd s(x)=0
 \qquad\text{for every }\omega\in\mathbb S^1.
\end{equation}
The function $u$ changes sign. Consequently the planar Berenstein conjecture is false.
\end{theorem}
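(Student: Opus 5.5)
We will follow the conformal fixed-disc strategy of \cite{ColbrookStepaniants2026}, adapted to the Dirichlet endpoint. We write $\Omega=f(\D)$, where $f(z)=z+\sum_{m\ge1}a_m z^{1+13m}$ has real coefficients. This ansatz builds in the $D_{13}$ symmetry: rotation by $2\pi/13$ and conjugation. Because $13$ is odd, the ansatz also allows the failure of central symmetry. We set $v=u\circ f$. Then \eqref{eq:main-pde} becomes the fixed-domain problem
\[
\Delta v+k^2|f'|^2v=0\ \text{in }\D,\qquad v=0,\qquad \partial_r v=|f'|\ \text{on }\T .
\]
We write $v=v_0+w$, where $v_0(z)=J_0(j_{0,q}|z|)$ is the disc solution at a suitably chosen high zero $j_{0,q}$; this is consistent with $k\approx27.44$. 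The unknowns are then $(f,w,k)$. We apply the zero-Dirichlet inverse Laplacian $G$ on $\D$ and its Neumann trace $N=\partial_r G|_{\T}$. The interior equation becomes $w=-G\big(k^2|f'|^2(v_0+w)\big)$. The boundary condition becomes an equation on $\T$ equating the Neumann trace with $|f'|$. This gives a coupled interior--boundary map $F(f,w,k)=0$. Unlike in \cite{ColbrookStepaniants2026}, the harmonic source modes cannot be absorbed: the boundary equation pins down exactly the components that the Neumann datum excites. A normalisation fixes the scale and removes the trivial dilation degeneracy.

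We discretise in disk polynomials (Zernike-type) with the symmetry built in. On these polynomials $G$ and $N$ act explicitly through exact support identities: $G$ maps $\bar z^a z^b$ times a radial polynomial to a finite combination of the same type. We compute an approximate zero $\bar x=(\bar f,\bar w,\bar k)$ by Newton iteration in floating point. The approximate root must come from a genuine non-disc branch. We will search for it by bifurcation from the disc family at the value of $j_{0,q}$ where the linearisation in the $13$-fold mode becomes degenerate, and then continue along the branch to a point with $a_1\neq0$ clearly separated from zero.

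Existence will come from a Newton--Kantorovich argument in a weighted $\ell^1$ space of disk-polynomial coefficients. This space is an algebra, and analyticity on a slightly larger disc follows from the geometric weights. We take $A$ to be an approximate inverse of $DF(\bar x)$: a numerical inverse on a finite block, and the diagonal of the leading part ($I$ on the interior variable, multiplication by the explicit Neumann symbol on the boundary) on the tail. We then bound
\[
Y\ge\|AF(\bar x)\|,\qquad Z(r)\ge\sup_{\|x-\bar x\|\le r}\|I-ADF(x)\|,
\]
and look for an $r$ with $Y+Z(r)r<r$. The tail bounds use quantitative decay estimates for $G$ on high-degree disk polynomials, which are of order $n^{-2}$, together with the fact that $|f'|$ is a finite polynomial times an analytic factor. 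All finite quantities will be checked in interval arithmetic with directed rounding. The resulting ball around $\bar k$ gives \eqref{eq:k-main}. We also verify that $f$ is univalent on $\overline{\D}$ (so that $\partial\Omega$ is a real-analytic Jordan curve) by bounding $|f'-1|$ and checking injectivity on $\T$.

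The remaining claims follow from the solution. Since $a_1\ne0$, the domain is not a disc, and since $13$ is odd it is not centrally symmetric. The identity \eqref{eq:boundary-fourier-main} follows from Green's identity: for $e_\omega(x)=e^{-ik\omega\cdot x}$ we have
\[
\int_{\partial\Omega}e_\omega\,\dd s=\int_{\partial\Omega}e_\omega\,\partial_\nu u\,\dd s=\int_\Omega\big(e_\omega\Delta u-u\Delta e_\omega\big)\,\dd x=0,
\]
where the last integrand vanishes because $\Delta u=-k^2u$ and $\Delta e_\omega=-k^2e_\omega$. Sign change follows from Serrin's theorem, since a sign-definite $u$ would force a disc. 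It can also be checked directly by enclosing $v$ near $v_0$, which has nodal circles. The main obstacle is the validated tail control for the coupled system. The Neumann trace $N$ is unbounded relative to $G$ in the weighted norm, since it costs a derivative. We will therefore scale the boundary unknowns by a degree weight so that $ADF$ is a compact perturbation of the identity, and then verify that this weighting keeps the tail contraction constant below one. Obtaining a uniform $Z$ bound in that scaling is the delicate step.
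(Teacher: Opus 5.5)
Your architecture is the paper's: a real $D_{13}$ conformal ansatz, disk polynomials, a zero-Dirichlet inverse with its Neumann trace, a finite block plus tails, directed rounding, Green's identity, and Serrin. The plan breaks, however, at the step everything else rests on, which is producing the centre.

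After normalising $f'(0)=1$ and $\partial_\nu u=1$, the only disc solutions are isolated points $k=j_{0,q}$. Every angular mode $n\ne0$ is killed because $J_n$ and $J_n'$ have no common positive zero. So there is no disc family to bifurcate from, and since the normalised problem has no free parameter, there is no branch inside it to continue. Nor is the $13$-fold linearisation degenerate at these points. It is singular exactly when $kJ_{13}'(k)+J_{13}(k)=0$, equivalently $W_{1,13}(k)=0$, because $J_1'(k)=-J_1(k)/k$ when $J_0(k)=0$. That is a coincidence with no mechanism behind it. Near $27.4$ the relevant zero of $W_{1,13}$ is $\mu\approx27.3968$, not $j_{0,9}\approx27.4935$, and the actual $k\approx27.4381$ is neither.

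The missing idea is to embed the problem in the mixed family $u=\text{const}$, $\partial_\nu u=1$. There the radial solutions $U_\mu=-J_0(\mu r)/(\mu J_1(\mu))$ form a one-parameter family with generally nonzero Dirichlet constant $U_\mu(1)$. One bifurcates at a zero of $W_{1,13}$ and continues the non-radial branch until that constant vanishes. The validation logic does not care where the centre comes from, but your search as stated yields no candidate to validate.

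Several further steps need repair.

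\emph{Central symmetry.} The claim ``since $13$ is odd it is not centrally symmetric'' is a non sequitur. A disc, or the image of $z+cz^{27}$ with small real $c$, is $D_{13}$-invariant and centrally symmetric; oddness of $13$ only makes failure possible. The correct argument: a centre of symmetry must be $0$ by the $13$-fold rotation. Then $-f(-z)$ is another normalised Riemann map, which forces $f$ to be odd and contradicts $a_1\ne0$, the $z^{14}$ coefficient.

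\emph{The Neumann trace and the tail inverse.} The obstacle you single out is an artefact. $N=\partial_rG|_{\T}$ acts on sources, with $N\Phi_{\ell,0}=e^{13i\ell\theta}/(2(13|\ell|+1))$ and $N\Phi_{\ell,s}=0$ for $s\ge1$. Hence $\norm{N}=1/2$ and no degree reweighting is needed. The high boundary modes are governed instead by the shape variation $-(\overline a\dot a+a\overline{\dot a})$ of $|a|^2$, with $a=f'$, whose principal part in mode $j$ is $-(\dot a)_j$. So the boundary tail of $\Aop$ must be a signed isometry onto the shape coordinates. Pairing it with the interior unknown through a Neumann symbol would leave the shape tail outside the range of $\Aop$, which forces $Z\ge1$.

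\emph{Boundary analyticity.} With weights $\rho^{|\ell|}$ in the angular index only, as the algebra and tail estimates use, membership in the coefficient space gives continuity on $\overline\D$ but not analyticity on a larger disc. Analyticity across $\T$ needs a separate argument. The paper uses $W^{2,q}$ regularity, then Cauchy--Kowalevski with data $(0,|a_*|)$ on $\T$, then analytic uniqueness.

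\emph{The unsquared boundary condition.} $\partial_rv=|f'|$ is not polynomial in the coefficients. Either square it and recover the sign from a certified positive lower bound on the computed Neumann trace, as the paper does, or carry square-root estimates through the algebra.

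\emph{Two small slips.} Since $v=v_0+w$ vanishes on $\T$, the interior equation is $w=-G\bigl(k^2|f'|^2(v_0+w)\bigr)-v_0$. Also, $v_0$ needs the factor $-1/(j_{0,q}J_1(j_{0,q}))$ to have unit flux.
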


\begin{figure}[t]
 \centering
 \includegraphics[width=.64\textwidth]{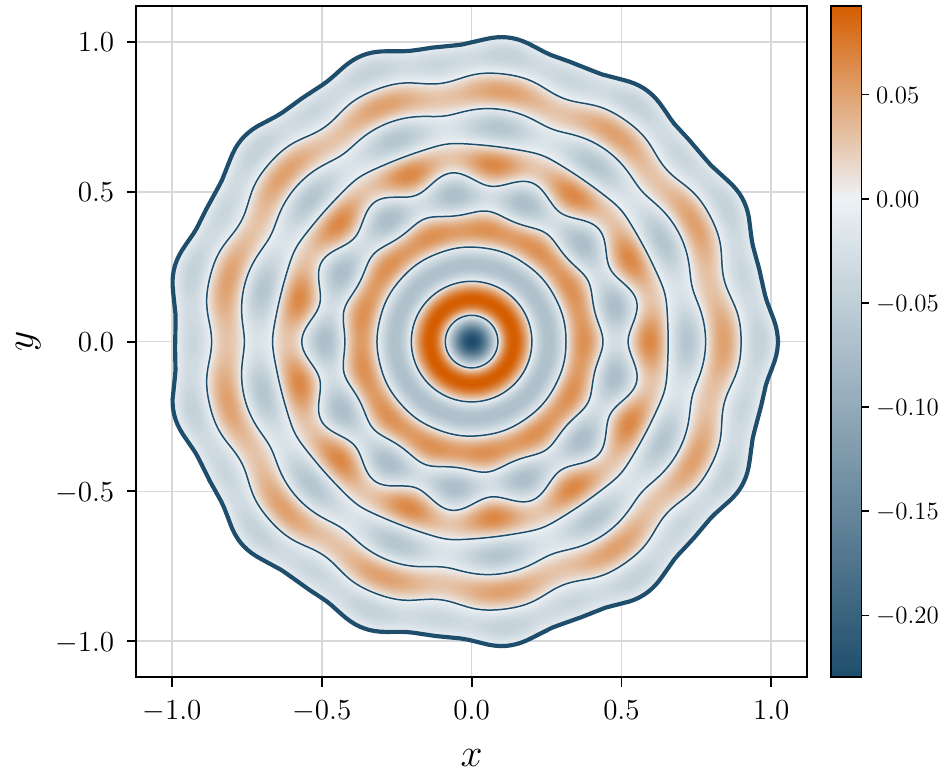}
 \caption{Finite-centre approximation to the sign-changing eigenfunction. The colour represents $u^\circ=(\KD g^\circ)\circ(\phi^\circ)^{-1}$ on $\Omega^\circ=\phi^\circ(\D)$. The eight dark interior curves are the interior zero level sets of $u^\circ$, and the outer curve is $\partial\Omega^\circ$. Under the normalised conformal parametrisations,
$\norm{u_*\circ\phi_*-\KD g^\circ}_{C(\overline\D)}\le2.5\times10^{-7}$,
while $\partial\Omega$ and $\partial\Omega^\circ$ differ pointwise by less than $4.67\times10^{-10}$; see Theorem~\ref{thm:validated-zero}, \eqref{eq:sharp-KD-N}, and \eqref{eq:map-centre-error}. The radial range of the displayed boundary curve is approximately $[0.989729,1.022211]$.}
 \label{fig:domain}
\end{figure}

Figure~\ref{fig:domain} displays the numerical centre used in the proof; estimate \eqref{eq:map-centre-error} controls its distance from the exact boundary. The exact boundary is a regular nodal line of $u$ on which the gradient has constant magnitude:
$$
 \nabla u=\nu,
 \qquad
 |\nabla u|=1
 \quad\text{on }\partial\Omega.
$$
Thus the obstruction is global rather than local. Analytic Cauchy data on a real-analytic curve determine a local two-sided Helmholtz solution, but local solvability does not imply that this solution extends regularly throughout the enclosed domain. The conformal reduction expresses this extension problem as a system of polynomial equations on the unit disc.

\subsection{Boundary arclength and the Fourier circle}

The boundary condition in \eqref{eq:main-pde} has a direct harmonic-analytic reformulation. Let $\sigma_\Gamma$ be arclength measure on a Jordan curve $\Gamma$. We prove in Proposition~\ref{prop:fourier-equivalence} that, for a $C^2$ bounded simply connected domain,
\begin{equation}\label{eq:fourier-equivalence-intro}
 \text{\eqref{eq:berenstein-intro} holds for the frequency $k$}
 \quad\Longleftrightarrow\quad
 \widehat{\sigma_{\partial\Omega}}(k\omega)=0
 \quad(\omega\in\mathbb S^1).
\end{equation}
The forward implication is Green's identity. For the converse, the outgoing single-layer potential with constant density has zero far field, hence vanishes in the exterior by Rellich uniqueness; the normal-derivative jump then produces the required interior eigenfunction. Thus Theorem~\ref{thm:main} also gives a noncircular analytic Jordan curve whose arclength Fourier transform vanishes on the circle $\{\xi\in\R^2:|\xi|=k\}$.

The Fourier equations provide an effective search mechanism, but they are not well suited to uniform tail estimates. The proof therefore proceeds through the fixed-disc system, whose high-mode linearisation has a diagonal, isometric principal part and admits explicit monotone bounds.

\subsection{The Pompeiu–Schiffer framework and the new Dirichlet endpoint}

The two constructions arise at complementary endpoints of mixed
overdetermined Helmholtz problems. Wheeler constructed local
noncircular branches with both boundary constants nonzero~\cite{Wheeler2025}.
Colbrook and Stepaniants worked in the $10$-fold symmetry class and
continued to the endpoint at which the Neumann datum vanishes, yielding
the Pompeiu--Schiffer counterexample~\cite{ColbrookStepaniants2026}.
The present $D_{13}$ construction instead reaches the endpoint at which
the Dirichlet datum vanishes. At the Fourier level,
\cite{ColbrookStepaniants2026} concerns cancellation of the area measure
$\mathbf{1}_{\Omega}$, whereas the present problem concerns cancellation
of the boundary arclength measure $\sigma_{\partial\Omega}$.

The distinction becomes more pronounced after conformal transfer to the disc. In the Schiffer problem the decomposition $U=1+v$ gives both $v|_{\T}=0$ and $\left.\partial_r v\right|_{\T}=0$. Consequently $g=\Delta v$ lies in the range compatible with both vanishing traces, its harmonic coordinates disappear, and an inverse $K$ of the Laplacian on this range absorbs the boundary conditions. With $p_{\mathrm{S}}:=k\phi'$ as in~\cite{ColbrookStepaniants2026}, the problem reduces to the single cubic equation
$$
 g+|p_{\mathrm{S}}|^2(1+Kg)=0.
$$
In the present problem, $U|_{\T}=0$ but $\left.\partial_r U\right|_{\T}=|a|\neq0$. The harmonic source coordinates must therefore be retained, and the Neumann condition cannot be incorporated into the inverse. The full zero-Dirichlet inverse $\KD$ and its trace $N$ instead yield the coupled interior--boundary system
\begin{equation}\label{eq:fourier-equivalence}
 g+k^2|a|^2\KD g=0,
 \qquad
 (Ng)^2-|a|^2=0.
\end{equation}
The squared boundary equation requires a separate sign argument to recover $Ng=|a|$, as well as estimates for the source and shape tails. For the present $D_{13}$ candidate, the trace operator annihilates every radial mode $s\ge1$, and the high angular modes have disjoint support from the finite block. The resulting finite block has dimension $882$, compared with $2471$ in the Pompeiu--Schiffer proof.

\subsection{Outline of the proof}

The argument proceeds through three analytic stages, followed by
reconstruction. After establishing~\eqref{eq:fourier-equivalence}, we
impose simultaneously the conformal-radius normalisation
$\phi'(0)=1$ and the flux normalisation $\partial_\nu u=1$.
Writing
$$
    a=\phi',\qquad U=u\circ\phi,\qquad g=\Delta U,
$$
gives the coupled interior--boundary system described in
Section~1.2, with
$$
    U=K_Dg,
    \qquad
    Ng=\left.\partial_r(K_Dg)\right|_{\mathbb T}.
$$
Positivity of $Ng$ recovers the unsquared condition $Ng=|a|$.

The coefficient space is built from $D_{13}$-adapted disk polynomials. Their positive product formula makes the weighted $\ell^1$ coefficient space a Banach algebra with multiplication constant one. We derive an explicit three-term formula for $\KD$ and the trace formula
$$
 N\Phi_{\ell,0}=\frac{e^{13i\ell\theta}}{2(13|\ell|+1)},
 \qquad N\Phi_{\ell,s}=0\quad(s\ge1),
$$
with sharp operator norms $\norm{\KD}=1/4$ and $\norm{N}=1/2$.

An a posteriori contraction theorem is then applied near a finitely supported centre. The approximate inverse is represented by a dense matrix on the retained block and by signed coordinate isometries on the tails. Support calculations reduce the retained-block contributions to $11{,}521$ source-tail columns and forty shape-tail columns; explicit bounds, monotone in the indices, cover all remaining columns. Interval arithmetic with directed rounding verifies
$$
 Y<3.63\times10^{-9},
 \qquad
 Z<0.474,
$$
and a strict contraction at radius $10^{-6}$. Finally, coefficient inequalities give $\Ree\phi'>0$, positivity of $Ng$, analytic boundary regularity, and noncircularity.

\section{Fourier characterisation and conformal reduction}

\subsection{Fourier-circle equivalence}

We use the Fourier convention
$$
 \widehat\mu(\xi)=\int_{\R^2}e^{-i\xi\cdot x}\,\dd\mu(x)
$$
for finite measures. The outgoing fundamental solution is
$$
 G_k(x)=\frac{i}{4}H_0^{(1)}(k|x|).
$$
With this convention,
$$
 (\Delta+k^2)G_k=-\delta_0.
$$
In this subsection, $\nu$ is the outward unit normal of $\Omega$; $\gamma_0^-$ and $\gamma_0^+$ are the interior and exterior Dirichlet traces, respectively, and $\gamma_1^-$ and $\gamma_1^+$ are the corresponding normal traces, all taken in the direction $\nu$.

\begin{proposition}[Fourier-circle equivalence]\label{prop:fourier-equivalence}
Let $\Omega\subset\R^2$ be a bounded domain whose boundary $\Gamma$ is a connected $C^2$ Jordan curve, and let $k>0$. The following are equivalent.
\begin{enumerate}[label=(\roman*)]
\item There exist a nonzero real-valued function $u\in H^2(\Omega)$ and a constant $c\ne0$ such that
$$
 (\Delta+k^2)u=0\quad\text{in }\Omega,
 \qquad
 u=0,
 \qquad
 \partial_\nu u=c
 \quad\text{on }\Gamma.
$$
\item The arclength measure of $\Gamma$ satisfies
$
 \widehat{\sigma_\Gamma}(k\omega)=0$ for every $\omega\in\mathbb S^1$.
\end{enumerate}
\end{proposition}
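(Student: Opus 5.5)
For (i)$\Rightarrow$(ii) we use Green's second identity against plane waves. Fix $\omega\in\mathbb S^1$ and set $w(x)=e^{-ik\omega\cdot x}$, which satisfies $(\Delta+k^2)w=0$ on $\R^2$. Since $u\in H^2(\Omega)$ and $\Gamma$ is $C^2$, Green's formula applies:
$$
0=\int_\Omega\bigl(w\Delta u-u\Delta w\bigr)\dd x=\int_\Gamma\bigl(w\,\partial_\nu u-u\,\partial_\nu w\bigr)\dd s=c\int_\Gamma e^{-ik\omega\cdot x}\dd s(x).
$$
Here the left side vanishes because both functions solve the Helmholtz equation, and the boundary term with $u$ drops because $u=0$ on $\Gamma$. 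Since $c\ne0$, this gives $\widehat{\sigma_\Gamma}(k\omega)=0$.

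For (ii)$\Rightarrow$(i) we use the single-layer potential with constant density,
$$
S(x)=\int_\Gamma G_k(x-y)\dd s(y),\qquad x\in\R^2\setminus\Gamma.
$$
It solves $(\Delta+k^2)S=0$ off $\Gamma$ and satisfies the Sommerfeld radiation condition. The asymptotics $H_0^{(1)}(t)\sim\sqrt{2/(\pi t)}\,e^{i(t-\pi/4)}$ give
$$
S(x)=\frac{e^{ik|x|}}{\sqrt{|x|}}\,C_k\,\widehat{\sigma_\Gamma}\bigl(k\,x/|x|\bigr)+O(|x|^{-3/2}),
$$
with $C_k\ne0$ an explicit constant. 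By (ii) the far-field pattern vanishes identically. Rellich's lemma then gives $S\equiv0$ in the unbounded component $\R^2\setminus\overline\Omega$; this component is connected because $\Gamma$ is a Jordan curve.

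Next we transfer this information to the interior through the classical jump relations for the single layer with continuous (here constant) density on a $C^2$ curve. The potential $S$ is continuous across $\Gamma$, so $\gamma_0^-S=\gamma_0^+S=0$. Its normal derivatives jump by the density, $\gamma_1^-S-\gamma_1^+S=1$ with the sign dictated by $(\Delta+k^2)G_k=-\delta_0$. Since $\gamma_1^+S=0$, we get $\gamma_1^-S=1$. We then define $u=S|_\Omega$. It solves the Helmholtz equation, satisfies $u=0$ and $\partial_\nu u=1$ on $\Gamma$, and is nonzero because its normal trace is nonzero.

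The remaining points are regularity and realness. For regularity, the single layer with smooth density on a $C^2$ boundary belongs to $H^{3/2}$ near $\Gamma$. Since $u$ has zero Dirichlet trace, elliptic regularity for the Dirichlet problem on a $C^2$ (indeed $C^{1,1}$) domain, $(\Delta+k^2)u=0$ with $u|_\Gamma=0$, upgrades this to $u\in H^2(\Omega)$; in particular $k^2$ must then be a Dirichlet eigenvalue. For realness, $\operatorname{Re}u$ satisfies the same equations with $c=1$, since the data are real, so we may take $u$ real. Alternatively, $\operatorname{Re}u$ is the imaginary-kernel part $-\tfrac14\int_\Gamma Y_0(k|x-y|)\dd s(y)$, up to the real/imaginary parts of $\tfrac i4 H_0^{(1)}$.

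I expect the main technical obstacle to be justifying the jump relation and trace regularity at the stated $C^2$ level, together with the sign convention for the jump. I would cite standard potential theory (Colton--Kress) for continuity of the single layer and the jump $\mp\tfrac12$ identities in the $C^2$ setting, and then fix the orientation carefully so that the constant obtained is $+1$. Its sign is in any case irrelevant, since only $c\ne0$ is required.
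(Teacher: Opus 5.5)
Your proposal is correct and follows the paper's proof essentially step by step. For (i)$\Rightarrow$(ii) you use Green's second identity against plane waves; for the converse you use the outgoing constant-density single layer, Rellich's lemma, continuity across $\Gamma$ together with the normal-derivative jump, zero-Dirichlet $H^2$ regularity, and finally the real part. The only cosmetic difference is that you invoke the classical Colton--Kress jump relations for continuous densities, whereas the paper cites McLean's Sobolev-space version.
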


\begin{proof}
Assuming (i), since both $u$ and $v_\omega(x):=e^{-ik\omega\cdot x}$ solve the Helmholtz equation, Green's second identity gives
$$
 0=\int_\Gamma
 \bigl(u\,\partial_\nu v_\omega-v_\omega\,\partial_\nu u\bigr)\,\dd s
 =-c\int_\Gamma e^{-ik\omega\cdot x}\,\dd s(x),
$$
which proves (ii). Conversely, suppose (ii) holds. The outgoing single-layer potential with constant density is
$$
 w(x)=\int_\Gamma G_k(x-y)\,\dd s(y).
$$
Its far-field pattern is a nonzero constant multiple of $\widehat{\sigma_\Gamma}(k\omega)$, so it vanishes identically. Rellich's lemma and exterior uniqueness imply that $w=0$ in the unbounded component of $\R^2\setminus\overline\Omega$, \cite[Chs.~2--3]{ColtonKress2019}. The single-layer potential is continuous across $\Gamma$, and hence
$$
 \gamma_0^-w=\gamma_0^+w=0.
$$
For the convention above, the normal traces of the single-layer operator
$$
 S_k\varphi(x):=\int_\Gamma G_k(x-y)\varphi(y)\,\dd s(y)
$$
satisfy the equation
$$
 \gamma_1^-S_k\varphi-\gamma_1^+S_k\varphi=\varphi.
$$
See \cite[Thm.~6.11]{McLean2000} for these continuity, mapping, and jump properties. Since the exterior field vanishes, $\gamma_1^+w=0$, and therefore
$
 \gamma_1^-w=1.
$
Standard mapping properties of the single-layer potential give $w|_\Omega\in H^1(\Omega)$, and $w|_\Omega$ is a weak solution of
$$
 (\Delta+k^2)w=0\quad\text{in }\Omega
$$
with zero Dirichlet trace. Zero-Dirichlet elliptic regularity on the $C^2$ domain $\Omega$ therefore gives $w|_\Omega\in H^2(\Omega)$. Taking the real part preserves the zero Dirichlet trace and the unit Neumann trace. Thus $\Ree(w|_\Omega)$ is a nontrivial real-valued function in $H^2(\Omega)$, and (i) holds with $c=1$.
\end{proof}

\subsection{Normalisation and transfer to the unit disc}

We identify $\R^2$ with $\C$. The conformal radius and the constant flux may be normalised simultaneously. Choose any point $p\in\Omega$, translate it to the origin, and let $\psi_0:\D\to\Omega-p$ be a Riemann map satisfying $\psi_0(0)=0$. Precomposing $\psi_0$ by the rotation
$
 z\mapsto e^{-i\arg\psi_0'(0)}z
$
gives a Riemann map $\psi$ for which
$$
 \psi'(0)=c_\phi:=|\psi_0'(0)|>0.
$$
Translations and rotations of the physical plane preserve the overdetermined problem; on the Fourier side a translation contributes only a unimodular phase and a rotation reparametrises $\omega$. Positive dilation preserves the problem after the corresponding rescaling of the wavenumber.

Suppose that $\partial_\nu u=c_0\ne0$. On
$$
 \widetilde\Omega=\frac{\Omega-p}{c_\phi}
$$
the rescaled function, wavenumber, and Riemann map are
$$
 \widetilde u(x)=\frac{u(p+c_\phi x)}{c_\phi c_0},
 \qquad
 \widetilde k=c_\phi k,
 \qquad
 \phi(z)=\frac{\psi(z)}{c_\phi}.
$$
Then $\widetilde u$ satisfies the overdetermined problem on $\widetilde\Omega$ with wavenumber $\widetilde k$ and outward normal derivative one. The map $\phi:\D\to\widetilde\Omega$ satisfies $\phi(0)=0$ and $\phi'(0)=1$. Moreover,
$$
 \widehat{\sigma_{\partial\widetilde\Omega}}(\widetilde k\omega)
 =c_\phi^{-1}e^{ik\omega\cdot p}
 \widehat{\sigma_{\partial\Omega}}(k\omega),
$$
so the Fourier-circle condition is preserved. After relabelling the normalised objects, we may therefore assume
\begin{equation}\label{eq:normalisation}
 \phi(0)=0,
 \qquad
 \phi'(0)=1,
 \qquad
 \partial_\nu u=1.
\end{equation}
We use the normalised conformal parametrisation of \cite[Sec.~2.1]{ColbrookStepaniants2026}, now in the real reflection-symmetric $D_{13}$ class. Thus, with $m=13$, we impose
\begin{equation}\label{eq:a-series}
 a(z):=\phi'(z)=1+\sum_{j\ge1}a_jz^{mj},
 \qquad a_j\in\R.
\end{equation}
Then
\begin{equation}\label{eq:phi-reconstruction}
 \phi(z)=z+\sum_{j\ge1}\frac{a_j}{mj+1}z^{mj+1}.
\end{equation}
The reality of the coefficients gives reflection symmetry, while the exponents imply
$$
 \phi(e^{2\pi i/m}z)=e^{2\pi i/m}\phi(z).
$$
Thus a univalent map of the form \eqref{eq:phi-reconstruction} has $D_m$-symmetric image.
For the pullback $U:=u\circ\phi$, the conformal transformation identities in \cite[eq.~(6)]{ColbrookStepaniants2026} give
\begin{equation}\label{eq:fixed-disc-pde}
 \Delta U+k^2|a|^2U=0\quad\text{in }\D,
 \qquad
 U=0,
 \qquad
 \partial_r U=|a|
 \quad\text{on }\T.
\end{equation}
Let $\KD$ be the zero-Dirichlet inverse developed in Section~\ref{sec:disk-algebra}. Its Neumann trace is
$$
 Ng:=\left.\partial_r(\KD g)\right|_{\T}.
$$
The identities $g=\Delta U$ and $U=\KD g$ transform \eqref{eq:fixed-disc-pde} into the polynomial system
\begin{equation}\label{eq:F-system}
 \begin{aligned}
 F_1(g,k,a)&:=g+k^2|a|^2\KD g=0&&\text{in }\D,\\
 F_2(g,k,a)&:=(Ng)^2-|a|^2=0&&\text{on }\T.
 \end{aligned}
\end{equation}
Zeros of \eqref{eq:F-system} recover overdetermined eigenfunctions once the mapping properties of $\KD$ and $N$, and the sign lost by squaring the boundary equation, have been resolved. The next section develops the required coefficient theory and then proves the equivalence with the original boundary problem.

\section{Disk polynomials, the Dirichlet inverse, and the trace}\label{sec:disk-algebra}

The fixed-disc system admits a quantitative polynomial formulation once multiplication, the zero-Dirichlet inverse, and its Neumann trace are controlled on a common coefficient space. The disk-polynomial basis supplies these three ingredients.

\subsection{The positive coefficient algebra}

Following \cite[Sec.~2.2]{ColbrookStepaniants2026}, for $z=re^{i\theta}$, $\ell\in\mathbb Z$, and $s\ge0$, the $D_{13}$-adapted disk polynomials are
\begin{equation}\label{eq:disk-polynomials}
 \Phi_{\ell,s}(r,\theta)
 =r^{m|\ell|}P_s^{(0,m|\ell|)}(2r^2-1)e^{im\ell\theta},
 \qquad m=13.
\end{equation}
Here $P_s^{(\alpha,\beta)}$ is the Jacobi polynomial normalised by $P_s^{(\alpha,\beta)}(1)=\binom{s+\alpha}{s}$; see \cite[Ch.~IV, eqs.~(4.1.1) and (4.3.3)]{Szego1975} for the normalisation and Jacobi orthogonality conventions. Jacobi orthogonality, with $\dd A$ denoting planar area measure, gives
\begin{equation}\label{eq:disk-orthogonality}
 \int_\D \Phi_{\ell,s}\overline{\Phi_{\ell',t}}\,\dd A
 =\frac{\pi}{m|\ell|+2s+1}\,\delta_{\ell\ell'}\delta_{st}.
\end{equation}
The family is complete in the $m$-fold rotationally invariant subspace of $L^2(\D)$. Moreover, $\Phi_{\ell,0}$ is harmonic, equal to $z^{m\ell}$ for $\ell\ge0$ and $\overline z^{m|\ell|}$ for $\ell<0$. A real-valued reflection-symmetric function has the unique expansion
\begin{equation}\label{eq:real-expansion}
 f=\sum_{s\ge0}f_{0,s}\Phi_{0,s}
 +\sum_{\ell\ge1}\sum_{s\ge0}f_{\ell,s}
 (\Phi_{\ell,s}+\Phi_{-\ell,s}),
 \qquad f_{\ell,s}\in\R.
\end{equation}
The symmetry multiplicities are $c_0=1$ and $c_\ell=2$ for $\ell\ge1$.

The positive linearisation theorem \cite[Lemma~2.2]{ColbrookStepaniants2026}, which is independent of the symmetry order, applies with $m=13$. Products of basis elements have finite expansions with nonnegative coefficients of total mass one. The same result gives
$$
 \overline{\Phi_{\ell,s}}=\Phi_{-\ell,s},
 \qquad
 \sup_{\overline\D}|\Phi_{\ell,s}|\le1.
$$

We use the $m=13$ versions of the weighted disk and holomorphic coefficient spaces from \cite[Secs.~2.2--2.3]{ColbrookStepaniants2026}, together with a boundary Wiener algebra for the Neumann trace. For $\rho>1$, define the complex coefficient space
\begin{equation}\label{eq:W-norm}
 \WW_\rho
 =\left\{f=\sum_{\ell\in\mathbb Z}\sum_{s\ge0}f_{\ell,s}\Phi_{\ell,s}:
 f_{\ell,s}\in\C,\quad
 \norm{f}_\rho:=\sum_{\ell\in\mathbb Z}\sum_{s\ge0}\rho^{|\ell|}|f_{\ell,s}|<\infty\right\}.
\end{equation}
For the real symmetry class \eqref{eq:real-expansion}, this norm is
$$
 \norm{f}_\rho
 =\sum_{\ell\ge0}\sum_{s\ge0}c_\ell\rho^\ell|f_{\ell,s}|.
$$
The closed real subspace of $\WW_\rho$ consisting of these real-valued, reflection-symmetric functions is denoted by $\YY_\rho$. The real boundary Wiener algebra $\BB_\rho$ carries the norm
$$
 \norm{b}_{\BB_\rho}
 =\sum_{\ell\ge0}c_\ell\rho^\ell|b_\ell|
$$
for
$$
 b(\theta)=b_0+\sum_{\ell\ge1}b_\ell(e^{im\ell\theta}+e^{-im\ell\theta}),
 \qquad b_\ell\in\R.
$$
Finally, let
\begin{equation}\label{eq:P-space}
 \PP_\rho
 =\left\{a(z)=\sum_{j\ge0}a_jz^{mj}:
 a_j\in\R,\quad
 \norm{a}_{\PP_\rho}:=\sum_{j\ge0}\rho^j|a_j|<\infty\right\}.
\end{equation}
Every $a\in\PP_\rho$ is holomorphic for $|z|<\rho^{1/m}$.

\begin{corollary}[Banach-algebra bounds]\label{cor:coefficient-algebra}
The space $\WW_\rho$ is a Banach algebra, and its norm is submultiplicative:
\begin{equation}\label{eq:algebra-bound}
 \norm{f_1f_2}_\rho\le\norm{f_1}_\rho\norm{f_2}_\rho.
\end{equation}
The space $\YY_\rho$ is a closed real subalgebra of $\WW_\rho$, and both spaces embed continuously in $C(\overline\D)$. The spaces $\BB_\rho$ and $\PP_\rho$ are Banach algebras with submultiplicative norms and embed continuously in $C(\T)$ and $C(\overline\D)$, respectively. Moreover, for $a,\eta\in\PP_\rho$ and $f\in\YY_\rho$,
\begin{align*}
 \norm{|a|^2f}_\rho
 &\le\norm{a}_{\PP_\rho}^2\norm{f}_\rho,\\
 \norm{(\overline a\eta+a\overline\eta)f}_\rho
 &\le2\norm{a}_{\PP_\rho}\norm{\eta}_{\PP_\rho}\norm{f}_\rho.
\end{align*}
On $\T$, we have that
$$
 \norm{|a|^2}_{\BB_\rho}
 \le\norm{a}_{\PP_\rho}^2,\qquad
 \norm{\overline a\eta+a\overline\eta}_{\BB_\rho}
 \le2\norm{a}_{\PP_\rho}\norm{\eta}_{\PP_\rho}.
$$
\end{corollary}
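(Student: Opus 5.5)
The argument reduces everything to the positive linearisation theorem \cite[Lemma~2.2]{ColbrookStepaniants2026}, together with elementary properties of weighted $\ell^1$ sequence spaces.

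\textbf{Submultiplicativity in $\WW_\rho$.} For basis elements, the lemma gives
$$
\Phi_{\ell,s}\Phi_{\ell',t}=\sum_{u}\lambda_u\Phi_{\ell+\ell',u},
\qquad \lambda_u\ge0,\qquad \sum_u\lambda_u=1.
$$
The product keeps angular index $\ell+\ell'$ because the $\theta$-dependence is $e^{im\ell\theta}e^{im\ell'\theta}$. Since $\rho^{|\ell+\ell'|}\le\rho^{|\ell|}\rho^{|\ell'|}$ for $\rho>1$, this yields $\norm{\Phi_{\ell,s}\Phi_{\ell',t}}_\rho\le\norm{\Phi_{\ell,s}}_\rho\norm{\Phi_{\ell',t}}_\rho$. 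For general $f_1,f_2\in\WW_\rho$ we expand bilinearly and apply the triangle inequality. Absolute convergence is justified because $\sup_{\overline\D}|\Phi_{\ell,s}|\le1$ gives $\norm{f}_{C(\overline\D)}\le\norm{f}_\rho$. Hence the series converge uniformly, multiplication of functions agrees with the rearranged coefficient product, and the continuous embedding into $C(\overline\D)$ is also established. Completeness holds because $\WW_\rho$ is a weighted $\ell^1$ space; uniqueness of coefficients follows from the orthogonality relation \eqref{eq:disk-orthogonality}.

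\textbf{$\YY_\rho$ as a closed real subalgebra.} Real-valuedness and reflection symmetry are preserved by products. In coefficients they are the conditions $f_{-\ell,s}=\overline{f_{\ell,s}}=f_{\ell,s}$, which are closed under $\ell^1$ limits. The algebra and embedding claims for $\BB_\rho$ and $\PP_\rho$ follow from the same weighted-$\ell^1$ argument, and are simpler:
\begin{itemize}
\item for $\BB_\rho$, use the characters $e^{im\ell\theta}$, where the weight $\rho^{|\ell|}$ is submultiplicative;
\item for $\PP_\rho$, use $z^{mj}z^{mj'}=z^{m(j+j')}$ with weight $\rho^j$, and note $|z^{mj}|\le1$ on $\overline\D$.
\end{itemize}

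\textbf{The mixed bounds.} The estimates involving $a$ and $\eta$ are the step needing care. The point is that $\norm{a}_\rho=\norm{a}_{\PP_\rho}$, because $z^{mj}=\Phi_{j,0}$ with weight $\rho^{j}$. Likewise $\overline a=\sum_j a_j\Phi_{-j,0}$ has the same $\WW_\rho$-norm. Thus
$$
\norm{|a|^2f}_\rho\le\norm{\overline a}_\rho\norm{a}_\rho\norm{f}_\rho=\norm{a}_{\PP_\rho}^2\norm{f}_\rho.
$$
In the same way, $\norm{\overline a\eta f}_\rho$ and $\norm{a\overline\eta f}_\rho$ are each at most $\norm{a}_{\PP_\rho}\norm{\eta}_{\PP_\rho}\norm{f}_\rho$, which gives the factor $2$.

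\textbf{The boundary versions.} On $\T$, restriction maps $\Phi_{\ell,s}$ to a multiple of $e^{im\ell\theta}$. In particular $\Phi_{\ell,0}|_\T=e^{im\ell\theta}$, and $\overline z=z^{-1}$ there. Restriction of harmonic elements is therefore norm-preserving into the complex Wiener algebra. The same product argument in that algebra, with weight $\rho^{|\ell|}$, gives the stated bounds on $\norm{|a|^2}_{\BB_\rho}$ and $\norm{\overline a\eta+a\overline\eta}_{\BB_\rho}$. Since these expressions are real and even in $\theta$, they lie in $\BB_\rho$, and the real norm written with the multiplicities $c_\ell$ coincides with the complex norm.
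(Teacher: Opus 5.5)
Your proposal is correct and follows essentially the paper's route. The paper gets the $\WW_\rho$ algebra property and the embedding from the positive-linearisation corollary of the earlier work, and $\YY_\rho$ from preservation of the conjugation symmetry. It handles $\BB_\rho$ and $\PP_\rho$ by weighted Fourier convolution, and derives the mixed disk and boundary bounds from $\norm{|a|^2}_\rho\le\norm{a}_{\PP_\rho}^2$ together with submultiplicativity; your explicit splitting of $\overline a\eta f+a\overline\eta f$ into two separately bounded products is simply a cleaner rendering of the paper's ``polarisation'' step, and it yields the factor $2$ directly.
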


\begin{proof}
The assertions for $\WW_\rho$, including \eqref{eq:algebra-bound} and the embedding into $C(\overline\D)$, follow from \cite[Corollary~2.3]{ColbrookStepaniants2026}; the proof there is independent of the symmetry order. The conjugation symmetry defining $\YY_\rho$ is preserved by multiplication, so $\YY_\rho$ is a closed real subalgebra. The assertions for $\BB_\rho$ and $\PP_\rho$ follow from weighted Fourier convolution. The disk estimate $\norm{|a|^2}_\rho\le\norm{a}_{\PP_\rho}^2$ follows from \cite[eq.~(23)]{ColbrookStepaniants2026}; submultiplicativity and polarisation give the remaining disk estimates. The same convolution argument gives the boundary estimates.
\end{proof}

\subsection{An explicit zero-Dirichlet inverse}

The preceding algebra controls the nonlinear factors in $F$. The remaining ingredient is an exact coefficient representation of the linear map $g\mapsto(\KD g,Ng)$. Because the Neumann trace is nonzero, the harmonic source coordinates $s=0$ must be retained. This is the main distinction from the Schiffer problem in~\cite{ColbrookStepaniants2026}, where the inverse acts on sources compatible with both zero Dirichlet and zero Neumann traces.

For $n=m|\ell|$, the action of $\KD$ on harmonic modes is
\begin{equation}\label{eq:KD-harmonic}
 \KD\Phi_{\ell,0}
 =\frac{\Phi_{\ell,1}-\Phi_{\ell,0}}{4(n+1)(n+2)}.
\end{equation}
For $s\ge1$, we use the compatible inverse formula from \cite[eq.~(19)]{ColbrookStepaniants2026}; only the harmonic columns $s=0$ are new at the Dirichlet endpoint. With $D:=n+2s$, its action is
\begin{equation}\label{eq:KD-nonharmonic}
 \KD\Phi_{\ell,s}
 =\frac{\Phi_{\ell,s-1}}{4D(D+1)}
 -\frac{\Phi_{\ell,s}}{2D(D+2)}
 +\frac{\Phi_{\ell,s+1}}{4(D+1)(D+2)}.
\end{equation}

\begin{lemma}[Exact inverse and Neumann trace]\label{lem:KD-exact}
For every $\ell\in\mathbb Z$ and $s\ge0$,
$$
 \Delta\KD\Phi_{\ell,s}=\Phi_{\ell,s},
 \qquad
 (\KD\Phi_{\ell,s})|_{\T}=0.
$$
Moreover,
\begin{equation}\label{eq:N-formula}
 N\Phi_{\ell,0}=\frac{e^{im\ell\theta}}{2(m|\ell|+1)},
 \qquad
 N\Phi_{\ell,s}=0\quad(s\ge1).
\end{equation}
\end{lemma}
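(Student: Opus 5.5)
The plan is to verify the three identities mode by mode, reducing everything to a one-variable radial computation. Write $n=m|\ell|$, $t=r^2$, and $\Phi_{\ell,s}=r^nQ_s(t)e^{im\ell\theta}$ with $Q_s(t)=P_s^{(0,n)}(2t-1)$. On functions of the form $r^nq(t)e^{im\ell\theta}$ the Laplacian acts by
$$
 \Delta\bigl(r^nq(t)e^{im\ell\theta}\bigr)=4r^n\bigl(tq''+(n+1)q'\bigr)e^{im\ell\theta}.
$$
This is the classical identity underlying disk polynomials, checked from $\partial_r^2+r^{-1}\partial_r-n^2r^{-2}$. So $\Delta$ acts on the radial factor by $L_n:=4\bigl(t\,\mathrm d^2/\mathrm dt^2+(n+1)\,\mathrm d/\mathrm dt\bigr)$, which lowers polynomial degree by one.

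\textbf{Boundary vanishing.} At $r=1$ we have $t=1$, and $Q_s(1)=P_s^{(0,n)}(1)=\binom{s}{s}=1$ for every $s$. The right-hand sides of \eqref{eq:KD-harmonic} and \eqref{eq:KD-nonharmonic} vanish on $\T$ exactly when their coefficients sum to zero. In \eqref{eq:KD-harmonic} this is immediate. In \eqref{eq:KD-nonharmonic}, with $D=n+2s$,
$$
 \frac{1}{4D(D+1)}-\frac{1}{2D(D+2)}+\frac{1}{4(D+1)(D+2)}
 =\frac{(D+2)-2(D+1)+D}{4D(D+1)(D+2)}=0 .
$$

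\textbf{The equation $\Delta\KD\Phi_{\ell,s}=\Phi_{\ell,s}$.} For $s\ge1$ I would cite \cite[eq.~(19)]{ColbrookStepaniants2026}, since the formula is taken verbatim from there and that paper proves it for arbitrary $n$. Alternatively, I would give a self-contained derivation. Degree counting shows $L_n$ maps $\{Q_0,\dots,Q_{s+1}\}$ into polynomials of degree $\le s$. Moreover $L_n$ is, up to sign, the Jacobi operator shifted by $(1-t)$ factors. The cleanest route is to use the standard contiguous relations
$$
 (1-x)P^{(1,n)}_{s}\propto P^{(0,n)}_{s}-P^{(0,n)}_{s+1},
 \qquad \frac{\mathrm d}{\mathrm dx}P^{(0,n)}_{s+1}=\tfrac{s+n+2}{2}P^{(1,n+1)}_{s},
$$
and to verify that $L_n$ applied to the three-term combination telescopes to $Q_s$. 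For the new harmonic column $s=0$, the computation is explicit: $Q_0=1$ and $Q_1(t)=(n+2)t-(n+1)$. Hence $Q_1-Q_0=(n+2)(t-1)$ and $L_n(Q_1-Q_0)=4(n+1)(n+2)$, which gives $\Delta\KD\Phi_{\ell,0}=\Phi_{\ell,0}$ directly.

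\textbf{Neumann trace.} For a combination $r^nq(t)e^{im\ell\theta}$ with $q(1)=0$, we get $\partial_r|_{r=1}=2q'(1)e^{im\ell\theta}$. I would use the endpoint derivative
$$
 Q_s'(1)=2\,\frac{\mathrm d}{\mathrm dx}P_s^{(0,n)}(1)=s(s+n+1),
$$
which follows from the derivative formula and $P^{(1,n+1)}_{s-1}(1)=s$.
\begin{itemize}
\item For $s=0$, $q=(Q_1-Q_0)/(4(n+1)(n+2))$ has $q'(1)=(n+2)/(4(n+1)(n+2))$. Thus $N\Phi_{\ell,0}=e^{im\ell\theta}/(2(n+1))$, as claimed.
\item For $s\ge1$ we need
$$
 \frac{(s-1)(D-s)}{4D(D+1)}-\frac{s(D-s+1)}{2D(D+2)}+\frac{(s+1)(D-s+2)}{4(D+1)(D+2)}=0,
$$
using $s+n+1=D-s+1$. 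Clearing denominators gives
$$
 (s-1)(D-s)(D+2)-2s(D-s+1)(D+1)+(s+1)(D-s+2)D.
$$
This is a polynomial identity I expect to vanish by direct expansion: the $s$-independent parts give $-D(D+2)+D(D+2)=0$, and the remaining terms cancel similarly.
\end{itemize}

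\textbf{Main obstacle.} I expect the obstacle to be the $s\ge1$ verification of $\Delta\KD\Phi_{\ell,s}=\Phi_{\ell,s}$ if done from scratch, because the normalising constants in the Jacobi contiguous relations must match the specific coefficients $1/(4D(D+1))$, $1/(2D(D+2))$ and $1/(4(D+1)(D+2))$. The first thing I would try is checking the leading-coefficient balance. $L_n$ multiplies the leading coefficient $\kappa_{s}$ of $Q_{s}$ by $4s(s+n)$, and $\kappa_{s+1}/\kappa_s=(2s+n+2)(2s+n+1)/((s+1)(s+n+1))$. From this the top-degree term should produce exactly $\kappa_s$. Lower-order agreement would then follow from orthogonality, by pairing with $Q_j$ against the weight $t^n$ via \eqref{eq:disk-orthogonality}. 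The boundary value computed above fixes the remaining freedom.
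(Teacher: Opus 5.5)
Your proposal is correct and matches the paper's proof: the harmonic column is checked directly from $Q_1-Q_0=(n+2)(t-1)$, and the $s\ge1$ interior identity is taken from \cite{ColbrookStepaniants2026}. Your direct check of the $s\ge1$ trace cancellations via $Q_s(1)=1$ and $Q_s'(1)=s(s+n+1)$, which the paper instead cites, is also sound: the cleared-denominator expression you only expected to vanish does vanish, since it collapses to $-2(D-s)-2s+2D=0$. For your optional self-contained route, the orthogonality step is Green's identity for $L_n=4t^{-n}\frac{\mathrm d}{\mathrm dt}\bigl(t^{n+1}\frac{\mathrm d}{\mathrm dt}\bigr)$ against $t^n\,\mathrm dt$, whose boundary terms $4\bigl(q'(1)Q_j(1)-q(1)Q_j'(1)\bigr)$ vanish because both traces $q(1)=0$ and $q'(1)=0$ were established first.
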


\begin{proof}
For $s\ge1$, the inverse identity and both trace cancellations follow from \cite[Lemma~2.4]{ColbrookStepaniants2026}; the proof there depends only on the angular degree and applies with $n=m|\ell|$. For $s=0$, the identity
$$
 P_1^{(0,n)}(2r^2-1)=(n+2)r^2-(n+1)
$$
reduces \eqref{eq:KD-harmonic} to
$$
 \KD\Phi_{\ell,0}
 =\frac{r^n(r^2-1)e^{im\ell\theta}}{4(n+1)}.
$$
Direct differentiation gives $\Delta\KD\Phi_{\ell,0}=\Phi_{\ell,0}$, its zero Dirichlet trace, and the first formula in \eqref{eq:N-formula}.
\end{proof}

\begin{lemma}[Sharp operator bounds]\label{lem:KD-norms}
The maps
$$
 \KD:\YY_\rho\to\YY_\rho,
 \qquad
 N:\YY_\rho\to\BB_\rho
$$
are bounded, with
\begin{equation}\label{eq:sharp-KD-N}
\norm{\KD}=\frac14,
 \qquad
 \norm{N}=\frac12.
\end{equation}
For each basis vector, with $n=m|\ell|$, the $\KD$ column norm is
\begin{equation}\label{eq:kappa-column}
 \kappa_{\ell,0}=\frac1{2(n+1)(n+2)},
 \qquad
 \kappa_{\ell,s}=\frac1{(n+2s)(n+2s+2)}\quad(s\ge1).
\end{equation}
\end{lemma}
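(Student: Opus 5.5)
Both operators act columnwise, and I will use the standard fact that on a weighted $\ell^1$ space the operator norm equals the supremum of weighted column norms. The plan is therefore to compute the column norms exactly and then take suprema.

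\emph{The column norms of $\KD$.} Fix $\ell$ and write $n=m|\ell|$. Both $\KD$ and $N$ preserve the angular index $\ell$, by \eqref{eq:KD-harmonic}, \eqref{eq:KD-nonharmonic} and \eqref{eq:N-formula}. The weight $\rho^{|\ell|}$ therefore cancels, and the column norm of $\KD\Phi_{\ell,s}$ in $\WW_\rho$ is just the sum of absolute values of the three (or two) coefficients.
\begin{itemize}
\item For $s=0$, \eqref{eq:KD-harmonic} gives $2/(4(n+1)(n+2))=1/(2(n+1)(n+2))$.
\item For $s\ge1$, with $D=n+2s$, \eqref{eq:KD-nonharmonic} gives
$$
\frac1{4D(D+1)}+\frac1{2D(D+2)}+\frac1{4(D+1)(D+2)}.
$$
Over the common denominator $4D(D+1)(D+2)$ the numerator is $(D+2)+2(D+1)+D=4(D+1)$. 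This simplifies to $1/(D(D+2))$, which is \eqref{eq:kappa-column}.
\end{itemize}

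\emph{Passing to $\YY_\rho$.} A real element is $f_{\ell,s}(\Phi_{\ell,s}+\Phi_{-\ell,s})$ with weight $c_\ell\rho^\ell$. Its image has the same multiplicity and weight, so the ratio is unchanged. Hence
$$
\norm{\KD f}_\rho\le\sum c_\ell\rho^\ell|f_{\ell,s}|\kappa_{\ell,s}\le\Bigl(\sup_{\ell,s}\kappa_{\ell,s}\Bigr)\norm f_\rho .
$$
Applying this to a single basis element shows the bound is attained, so $\norm{\KD}=\sup\kappa_{\ell,s}$.

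\emph{The supremum for $\KD$.} Both expressions in \eqref{eq:kappa-column} decrease in $n$ and in $s$.
\begin{itemize}
\item For $s=0$ the largest value is $\kappa_{0,0}=1/4$.
\item For $s\ge1$ the largest value is $\kappa_{0,1}=1/(2\cdot4)=1/8$.
\end{itemize}
So $\norm{\KD}=1/4$, attained at $\Phi_{0,0}$. Real-valuedness and reflection symmetry are preserved because the coefficients in \eqref{eq:KD-harmonic} and \eqref{eq:KD-nonharmonic} are real and depend only on $|\ell|$. This shows $\KD$ maps $\YY_\rho$ into $\YY_\rho$.

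\emph{The trace $N$.} By \eqref{eq:N-formula}, $N$ kills every $s\ge1$. It sends $\Phi_{\ell,0}+\Phi_{-\ell,0}$ to $(e^{im\ell\theta}+e^{-im\ell\theta})/(2(n+1))$. In $\BB_\rho$ this has norm $c_\ell\rho^\ell/(2(n+1))$, while the source has norm $c_\ell\rho^\ell$. So the column ratio is $1/(2(m\ell+1))$ for $s=0$ and $0$ for $s\ge1$. The supremum is $1/2$, attained at $\ell=0$, and the same $\ell^1$ argument gives $\norm{N}=1/2$. The output is real and even in $\ell$, hence lies in $\BB_\rho$.

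\emph{Main obstacle.} The only delicate point is justifying termwise application of $\KD$ and $N$ to infinite expansions. Absolute convergence in $\WW_\rho$ plus the uniform column bounds make the formal series converge in norm. Boundedness then lets $\KD$ and $N$ be defined by continuous extension from finite sums. Since $\YY_\rho\hookrightarrow C(\overline\D)$ by Corollary~\ref{cor:coefficient-algebra}, this extension agrees with the true zero-Dirichlet inverse and its trace via Lemma~\ref{lem:KD-exact}.
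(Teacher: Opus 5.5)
Your proposal is correct and follows essentially the same route as the paper: you compute the absolute column sums of $\KD$ and $N$ from the explicit three-term and trace formulae (the weight $c_\ell\rho^{\ell}$ cancels because both operators preserve the angular index), then take the supremum, which is attained at $\Phi_{0,0}$ and gives $\norm{\KD}=1/4$ and $\norm{N}=1/2$. The differences are minor: you verify the $s\ge1$ column sum $1/[D(D+2)]$ directly where the paper cites the earlier calculation, and your closing remark identifying the continuous extension with the true Dirichlet solution and its trace is not needed for this lemma, since the paper handles it separately in Proposition~\ref{prop:weak-KD}.
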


\begin{proof}
For $s\ge1$, the column-sum calculation in \cite[eq.~(21)]{ColbrookStepaniants2026}, with $D=n+2s$, gives $\kappa_{\ell,s}=1/[D(D+2)]$. For $s=0$, the absolute column sum in \eqref{eq:KD-harmonic} is $1/[2(n+1)(n+2)]$, whose maximum is $1/4$. Thus $\norm{\KD}=1/4$, with equality on $\Phi_{0,0}$. Formula \eqref{eq:N-formula} gives column ratio $1/[2(n+1)]$ and hence $\norm N=1/2$, again with equality at $\ell=0$.
\end{proof}

The next proposition identifies the coefficient formulae with the Sobolev solution operator used in the conformal reduction.

\begin{proposition}[Dirichlet solution operator]\label{prop:weak-KD}
For every $q>2$ and $g\in\YY_\rho$, the coefficient series $U=\KD g$ is the unique function in $W^{2,q}(\D)\cap W^{1,q}_0(\D)$ satisfying
$$
 \Delta U=g\quad\text{in }\D.
$$
Its outward normal trace is
$
 \gamma_1 U:=\left.\partial_r U\right|_{\T}=Ng.
$
\end{proposition}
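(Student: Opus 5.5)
The proof compares the coefficient series $U=\KD g$ with the unique Sobolev solution. We show the series converges in $W^{2,q}(\D)$, identify its Laplacian and traces termwise, and then invoke uniqueness for the Dirichlet problem.

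\emph{Step 1: existence of the Sobolev solution.} Since $g\in\YY_\rho\subset C(\overline\D)\subset L^q(\D)$ by Corollary~\ref{cor:coefficient-algebra}, standard $L^q$ theory for the Dirichlet problem on the smooth domain $\D$ gives a unique $V\in W^{2,q}(\D)\cap W^{1,q}_0(\D)$ with $\Delta V=g$. Uniqueness is the maximum principle (or Agmon--Douglis--Nirenberg) for $W^{2,q}$ functions with zero trace and zero Laplacian. It therefore suffices to show $U=V$.

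\emph{Step 2: convergence and identification.} Write $g=\sum g_{\ell,s}\Phi_{\ell,s}$ and let $g_M$ be the finite truncations to $|\ell|+s\le M$. By Lemma~\ref{lem:KD-exact}, each $U_M:=\KD g_M$ is a polynomial with $\Delta U_M=g_M$ and $U_M|_\T=0$, so $U_M$ equals the Sobolev solution for the data $g_M$. By Lemma~\ref{lem:KD-norms}, $\norm{\KD(g-g_M)}_\rho\le\frac14\norm{g-g_M}_\rho\to0$, hence $U_M\to U$ uniformly on $\overline\D$. The $L^q$ estimate $\norm{W}_{W^{2,q}}\le C_q\norm{\Delta W}_{L^q}$ for $W\in W^{2,q}\cap W^{1,q}_0$ gives
$$
\norm{U_M-V}_{W^{2,q}}\le C_q\norm{g_M-g}_{L^q}\le C_q\pi^{1/q}\norm{g_M-g}_{C(\overline\D)}\le C'\norm{g_M-g}_\rho\to0 .
$$
Thus $U_M\to V$ in $W^{2,q}$, and therefore uniformly as well, since $q>2$. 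We conclude $U=V$.

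\emph{Step 3: the Neumann trace.} For $q>2$, Morrey's embedding gives $W^{2,q}(\D)\hookrightarrow C^1(\overline\D)$, so $\partial_rU_M\to\partial_rV$ uniformly on $\T$. On the other hand, by \eqref{eq:N-formula}, $\partial_rU_M|_\T=Ng_M$. Since $\norm{N(g-g_M)}_{\BB_\rho}\le\frac12\norm{g-g_M}_\rho\to0$ and $\BB_\rho\hookrightarrow C(\T)$, we also have $Ng_M\to Ng$ uniformly. Hence $\gamma_1U=\partial_rV|_\T=Ng$.

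The only delicate point is making sure that the coefficient series and the Sobolev object are the same function. This rests on passing the limit through the a priori $W^{2,q}$ estimate, with the $\rho$-norm controlling the sup norm; the sup-norm control comes from $|\Phi_{\ell,s}|\le1$. Everything else is bookkeeping. In particular, we never need to differentiate the series termwise: the derivative information comes from the $W^{2,q}$ convergence, which does not rely on any derivative bounds for the disk polynomials.
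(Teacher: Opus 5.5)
Your proposal is correct and follows essentially the same route as the paper: truncate $g$, use the exact identities of Lemma~\ref{lem:KD-exact} on the truncations, apply the a priori $W^{2,q}$ Dirichlet estimate together with $\norm{\KD}=1/4$ and $\WW_\rho\hookrightarrow C(\overline\D)$ to identify the coefficient series with the Sobolev solution, and then pass to the limit in $\gamma_1\KD g_M=Ng_M$ using $\norm N=1/2$ and $\BB_\rho\hookrightarrow C(\T)$. The paper cites this truncation argument from the Pompeiu--Schiffer paper rather than writing it out, and it appeals to continuity of the normal-trace map where you use the Morrey embedding $W^{2,q}(\D)\hookrightarrow C^1(\overline\D)$; for $q>2$ the two are interchangeable.
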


\begin{proof}
The truncation and Dirichlet-regularity argument in \cite[proof of Lemma~2.5]{ColbrookStepaniants2026} applies to $\KD:\YY_\rho\to\YY_\rho$, by Lemmas~\ref{lem:KD-exact} and~\ref{lem:KD-norms}. It gives the unique solution $U=\KD g$ in $W^{2,q}(\D)\cap W^{1,q}_0(\D)$. For finite coefficient truncations $g_n$, continuity of the normal-trace map, the identities $\gamma_1\KD g_n=Ng_n$, and the bounded embedding $\BB_\rho\hookrightarrow C(\T)$ give $\gamma_1U=Ng$.
\end{proof}

\subsection{The fixed-disc equivalence and analytic operator}

The preceding coefficient formulae and mapping properties now justify the fixed-disc reduction.

\begin{proposition}[Equivalence of the fixed-disc system]\label{prop:fixed-disc-equivalence}
Fix $\rho>1$ and $m=13$. Suppose $k>0$,
$$
 g\in\YY_\rho,
 \qquad
 a(z)=1+\sum_{j\ge1}a_jz^{mj}\in\PP_\rho,
 \qquad a_j\in\R,
$$
where $\YY_\rho$ is the real-valued, reflection-symmetric $D_m$ coefficient space introduced above. Assume in addition that the map \eqref{eq:phi-reconstruction} is univalent on a neighbourhood of $\overline\D$ and that $a$ does not vanish on $\T$. If $(g,k,a)$ is a zero of \eqref{eq:F-system}, then there is a unique sign $\varepsilon\in\{-1,1\}$ such that
$
 N(\varepsilon g)=|a|\quad\text{on }\T,
$
and
$
 u=\bigl(\KD(\varepsilon g)\bigr)\circ\phi^{-1}
$
is a nonzero real-valued solution of \eqref{eq:main-pde} on $\Omega=\phi(\D)$. Conversely, let $u\in H^2(\Omega)$ be a normalised real-valued solution of \eqref{eq:main-pde} whose normalised Riemann map has the displayed real $D_m$ form, with $a=\phi'\in\PP_\rho$. If the pulled-back source
$
 g:=\Delta(u\circ\phi)
$
belongs to $\YY_\rho$, then $(g,k,a)$ is a zero of \eqref{eq:F-system}.
\end{proposition}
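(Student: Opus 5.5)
For the forward direction I would first recover the sign, and only then transport the boundary problem back to $\Omega$.

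\emph{Sign recovery.} By Lemma~\ref{lem:KD-norms} and Corollary~\ref{cor:coefficient-algebra}, $Ng\in\BB_\rho\subset C(\T)$ and $|a|\in C(\T)$. The equation $F_2=0$ gives $(Ng)^2=|a|^2$ pointwise, so $|Ng|=|a|>0$ on $\T$, because $a$ does not vanish there. Hence $Ng$ is a continuous function with no zeros on the connected set $\T$. It therefore has a constant sign $\varepsilon$, and $N(\varepsilon g)=|a|$. Uniqueness of $\varepsilon$ is clear, since $|a|\ne0$ excludes $\varepsilon g$ and $-\varepsilon g$ both satisfying the equation. Both $F_1$ and $F_2$ are invariant under $g\mapsto-g$, so we may replace $g$ by $\varepsilon g$.

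\emph{Regularity on the disc.} Set $U=\KD g$. By Proposition~\ref{prop:weak-KD}, $U\in W^{2,q}(\D)\cap W^{1,q}_0(\D)$ for every $q>2$, with $\Delta U=g$, $U|_\T=0$ and $\partial_rU|_\T=Ng=|a|$. The equation $F_1=0$ reads $g=-k^2|a|^2U$, so $\Delta U+k^2|a|^2U=0$; this is \eqref{eq:fixed-disc-pde}. Since $g\in\YY_\rho$ is real-valued, $U$ is real. Moreover $U\neq0$, because $\partial_rU=|a|\ne0$ on $\T$.

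\emph{Transfer to $\Omega$.} Since $\phi$ is univalent on a neighbourhood of $\overline\D$ and $\phi'=a\ne0$ on $\overline\D$, I would apply the conformal identities already used for \eqref{eq:fixed-disc-pde}. These are $\Delta(u\circ\phi)=|\phi'|^2(\Delta u)\circ\phi$ and $\partial_\nu u\circ\phi=|a|^{-1}\partial_rU$ on $\T$. With $u=U\circ\phi^{-1}$ they give $(\Delta+k^2)u=0$ in $\Omega$, $u=0$ and $\partial_\nu u=1$ on $\partial\Omega$.

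The step I expect to need the most care is the regularity claim $u\in C^\omega(\overline\Omega)$, because the PDE has the nonconstant coefficient $|a|^2$. I would argue it in three steps. First, $a\in\PP_\rho$ is holomorphic on $|z|<\rho^{1/m}$, so $|a|^2$ is real-analytic near $\overline\D$. Second, in $\Omega$ the function $u$ solves the constant-coefficient Helmholtz equation, so it is real-analytic in the interior. Third, $\partial\Omega=\phi(\T)$ is a real-analytic curve because $\phi$ is holomorphic across $\T$. Analytic boundary regularity for the Dirichlet problem (Morrey--Nirenberg), or the Cauchy--Kovalevskaya extension across the analytic boundary using the analytic Cauchy data $(0,1)$, then gives analyticity up to the boundary. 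If the statement only requires the $H^2$ notion of solution, the $W^{2,q}$ regularity of $U$ combined with the smooth diffeomorphism $\phi$ already suffices.

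\emph{Converse.} Let $\phi$ have the stated form and set $U=u\circ\phi$. The same conformal identities give \eqref{eq:fixed-disc-pde}, so $g=\Delta U=-k^2|a|^2U$. Because $\Omega$ has a real-analytic Jordan boundary, $\phi$ extends conformally across $\T$, and $U\in H^2(\D)\cap H^1_0(\D)$. Since $g\in\YY_\rho$, Proposition~\ref{prop:weak-KD} shows that $\KD g$ is the unique $W^{2,q}\cap W^{1,q}_0$ solution of $\Delta V=g$. For $U$ itself, the $H^2$ uniqueness of the Dirichlet problem on $\D$ gives $U=\KD g$; this shows $F_1=0$. The normal trace then equals $Ng=\partial_rU=|a|$ on $\T$, so $(Ng)^2-|a|^2=0$; this shows $F_2=0$.
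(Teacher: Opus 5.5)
Your proposal is correct and follows the paper's proof essentially step for step. It recovers the sign from the continuity and nonvanishing of $Ng$ on the connected circle $\T$, obtains the disc equations and traces from Proposition~\ref{prop:weak-KD} and the conformal identities, and proves the converse by uniqueness of the zero-Dirichlet problem. There are only minor points: $F_1$ is odd rather than invariant under $g\mapsto-g$ (only invariance of the zero set is used); in the converse, the regularity of $\phi$ up to $\T$ comes directly from the hypothesis $a\in\PP_\rho$; and the $C^\omega$ discussion is not needed here, since the paper treats it separately in Section~\ref{sec:reconstruction}.
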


\begin{proof}
The second equation in \eqref{eq:F-system} says
$
 (Ng)^2=|a|^2.
$
By \eqref{eq:N-formula}, Lemma~\ref{lem:KD-norms}, and Corollary~\ref{cor:coefficient-algebra}, $Ng$ is a continuous real-valued function. Since $a$ does not vanish on $\T$, $Ng$ never vanishes and has constant sign on the connected circle. Moreover,
$$
 F_1(-g,k,a)=-F_1(g,k,a),
 \qquad
 F_2(-g,k,a)=F_2(g,k,a).
$$
Thus the zero set of $F$ is invariant under $g\mapsto-g$, and there is a unique $\varepsilon\in\{-1,1\}$ such that $N(\varepsilon g)=|a|$.

For $U:=\KD(\varepsilon g)$, the triple $(\varepsilon g,k,a)$ is again a zero, so the first equation in \eqref{eq:F-system} gives the interior equation in \eqref{eq:fixed-disc-pde}, and the definition of $\KD$ gives $U|_{\T}=0$. For $u=U\circ\phi^{-1}$, the Sobolev transfer in \cite[proof of Proposition~2.1]{ColbrookStepaniants2026} and the normal-derivative identity in \cite[eq.~(6)]{ColbrookStepaniants2026} give
$$
 \partial_r U=|a|\,(\partial_\nu u)\circ\phi
 \quad\text{on }\T.
$$
Hence $\partial_\nu u=1$. The nonzero boundary trace $\partial_r U=|a|$ also shows that $U$, and therefore $u$, is nontrivial.

Conversely, a normalised solution in the stated class determines $U:=u\circ\phi$ and $g:=\Delta U$. Since $U|_{\T}=0$, uniqueness of the weak zero-Dirichlet problem gives $U=\KD g$. The same conformal transformation identities yield
$$
 g+k^2|a|^2\KD g=0,
 \qquad
 Ng=\left.\partial_r U\right|_{\T}=|a|,
$$
and hence both equations in \eqref{eq:F-system}.
\end{proof}

We now formulate this equivalence as a polynomial equation on weighted coefficient spaces. Let $\PP_{\rho,0}$ be the subspace of $\PP_\rho$ consisting of perturbations with zero constant coefficient. The unknown is
$$
 x=(g,k,\widetilde a)\in\XX_\rho:=\YY_\rho\times\R\times\PP_{\rho,0},
 \qquad a=1+\widetilde a,
$$
and the residual belongs to
$$
 \ZZ_\rho:=\YY_\rho\times\BB_\rho.
$$
The map $F=(F_1,F_2)$ from \eqref{eq:F-system} is a real polynomial map $\XX_\rho\to\ZZ_\rho$. In what follows, $(g,k,a)$ denotes the same element, where $a=1+\widetilde a$ is the full conformal derivative; its third coordinate in $\XX_\rho$ is therefore $a-1$. At $x=(g,k,a)$, let
$$
 U=\KD g,
 \qquad b_g=Ng.
$$
For a variation $h=(\dot g,\dot k,\dot a)\in\XX_\rho$,
\begin{align}
 DF_1(x)h
 &=\dot g+k^2|a|^2\KD\dot g\notag\\
 &\quad+\bigl(2k\dot k|a|^2
 +k^2(\overline a\dot a+a\overline{\dot a})\bigr)U,
 \label{eq:DF1}\\
 DF_2(x)h
 &=2b_gN\dot g-(\overline a\dot a+a\overline{\dot a}).
 \label{eq:DF2}
\end{align}
Suppressing the fixed parameter $\rho$ from the norm subscripts, we use the scaled norms
\begin{align}
 \norm{(\dot g,\dot k,\dot a)}_\XX
 &=\norm{\dot g}_\rho+\tau|\dot k|+2\sigma\norm{\dot a}_{\PP_\rho},
 \label{eq:X-scaled}\\
 \norm{(y,b)}_\ZZ
 &=\norm{y}_\rho+\sigma\norm{b}_{\BB_\rho}.
 \label{eq:Z-scaled}
\end{align}
The proof uses
\begin{equation}\label{eq:scales}
 \rho=\frac{2296835809958953}{2251799813685248}\approx1.02,
 \qquad
 \sigma=75,
 \qquad
 \tau=1.
\end{equation}
The factor $2\sigma$ makes the leading term in each high-index shape direction in \eqref{eq:DF2} a signed coordinate isometry. With this scaling, the approximate inverse can act densely on a finite block and by signed coordinate isometries on the tails. This structure isolates the finite inverse calculation and reduces the tail analysis to the coupling between these pieces.

\section{A posteriori validation}\label{sec:validation}

The estimates of Section~\ref{sec:disk-algebra} reduce existence to the injectivity of an approximate inverse, a residual bound at the centre, and a bound for the derivative defect. The validation begins with the abstract contraction criterion and then verifies its hypotheses at the numerical centre.

\subsection{A radii-polynomial theorem}

Fix a finitely supported centre $x^\circ=(g^\circ,k^\circ,a^\circ)$ and a bounded linear approximate inverse $\Aop:\ZZ_\rho\to\XX_\rho$. The associated quantities are
$$
 A_0=\norm{a^\circ}_{\PP_\rho},
 \qquad
 U_0=\norm{\KD g^\circ}_\rho,
 \qquad
 K_0=|k^\circ|,
 \qquad
 \Lambda_A=\norm{\Aop}.
$$
The following criterion is a radii-polynomial form of the Newton--Kantorovich argument, introduced in~\cite{DayLessardMischaikow2007} and adapted to analytic weighted sequence spaces in~\cite[Sec.~3, Def.~1 and Prop.~2]{HungriaLessardMirelesJames2016}.

\begin{theorem}[Radii criterion]\label{thm:radii}
Suppose $\Aop$ is injective and
\begin{equation}\label{eq:YZ-def}
 \norm{\Aop F(x^\circ)}_\XX\le Y,
 \qquad
 \norm{I-\Aop DF(x^\circ)}_{\XX\to\XX}\le Z.
\end{equation}
The quantities entering the criterion are
\begin{align}
 M_1(r)
 &=\left(K_0+\frac r\tau\right)^2
 \left(A_0+\frac r{2\sigma}\right)^2
 \left(U_0+\frac r4\right)-K_0^2A_0^2U_0-r\left(
 \frac{2K_0A_0^2U_0}{\tau}
 +\frac{K_0^2A_0U_0}{\sigma}
 +\frac{K_0^2A_0^2}{4}
 \right),
 \label{eq:M1}\\
 M_2(r)&=\left(\frac14+\frac1{4\sigma^2}\right)r^2,
 \label{eq:M2}\\
 \mathcal R(r)&=\Lambda_A\bigl(M_1(r)+\sigma M_2(r)\bigr),
 \label{eq:R-majorant}\\
 p(r)&=Y+(Z-1)r+\mathcal R(r).
 \label{eq:radii-polynomial}
\end{align}
If, for some $r>0$,
\begin{equation}\label{eq:radii-conditions}
 p(r)<0,
 \qquad
 Z+\mathcal R'(r)<1,
\end{equation}
then $F$ has a unique zero in the closed ball $\overline B_\XX(x^\circ,r)$.
\end{theorem}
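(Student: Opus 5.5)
The plan is the standard Newton--Kantorovich contraction argument applied to the fixed-point map
$$
 T(x):=x-\Aop F(x),\qquad x\in\overline B_\XX(x^\circ,r).
$$
Because $\Aop$ is injective, $x$ is a zero of $F$ exactly when it is a fixed point of $T$. It therefore suffices to show that $T$ maps the closed ball into itself and is a strict contraction there. Banach's fixed-point theorem then gives existence and uniqueness in $\overline B_\XX(x^\circ,r)$, since $\XX_\rho$ is a Banach space as a product of Banach spaces.

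The key estimate is a bound on $DT(x)=I-\Aop DF(x)$ for $x=x^\circ+w$ with $\norm{w}_\XX\le r$. I write
$$
 I-\Aop DF(x)=\bigl(I-\Aop DF(x^\circ)\bigr)-\Aop\bigl(DF(x)-DF(x^\circ)\bigr).
$$
The first term is bounded by $Z$, so the task is to show that $\norm{DF(x)-DF(x^\circ)}_{\XX\to\ZZ}\le \mathcal R'(r)/\Lambda_A$.

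To obtain this I use that $F$ is polynomial. $F_1=g+k^2|a|^2\KD g$ is a product of the factors $k$, $k$, $a$, $\overline a$, $\KD g$. Each factor is controlled in norm via Corollary~\ref{cor:coefficient-algebra} and Lemma~\ref{lem:KD-norms}:
\begin{itemize}
 \item $|k|\le K_0+r/\tau$;
 \item $\norm{a}_{\PP_\rho}\le A_0+r/(2\sigma)$;
 \item $\norm{\KD g}_\rho\le U_0+r/4$.
\end{itemize}
These hold because the $\XX$-norm \eqref{eq:X-scaled} weights $\dot k$ by $\tau$ and $\dot a$ by $2\sigma$, and $\norm{\KD}=1/4$. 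Submultiplicativity then says that $F_1-(\text{its first-order Taylor part})$ is majorised by the corresponding scalar polynomial
$$
 (K_0+s/\tau)^2(A_0+s/(2\sigma))^2(U_0+s/4).
$$
Subtracting the constant and linear terms of this majorant yields exactly $M_1$ in \eqref{eq:M1}. I would verify this by a term-by-term comparison of the multilinear expansion, noting that the linear part of $M_1$ matches the variation of $k^2|a|^2 U$.

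$F_2=(Ng)^2-|a|^2$ is quadratic. Its second-order part is bounded in $\BB_\rho$ by
$$
 \norm{N\dot g}^2+\norm{\dot a}_{\PP_\rho}^2\le \tfrac14\norm{\dot g}^2+\norm{\dot a}^2,
$$
using $\norm N=1/2$. Since $\norm{\dot a}\le r/(2\sigma)$ under the scaled norm, this gives $(1/4+1/(4\sigma^2))r^2=M_2(r)$, and the factor $\sigma$ in \eqref{eq:Z-scaled} then produces $\sigma M_2$.

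Convexity of these majorants, which have nonnegative coefficients, lets me convert the higher-order bound into the derivative bound
$$
 \norm{DF(x^\circ+w)-DF(x^\circ)}\le \bigl(M_1'(r)+\sigma M_2'(r)\bigr),
$$
giving $\norm{DT}\le Z+\mathcal R'(r)<1$ on the ball. By the mean value inequality I then get
$$
 \norm{T(x)-x^\circ}\le \norm{\Aop F(x^\circ)}+\int_0^1\norm{DT(x^\circ+tw)}\,\norm w\,\dd t\le Y+Zr+\mathcal R(r).
$$
Here the integral of $\mathcal R'$ is taken along the radius, using that $\mathcal R(0)=0$. The hypothesis $p(r)<0$ makes this at most $r$, so $T$ maps the ball into itself.

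The main obstacle I expect is the bookkeeping that the abstract majorant really dominates. Specifically, I must check that the subtracted linear terms in $M_1$ correspond exactly to $DF_1(x^\circ)$, and that the derivative of the majorant with respect to $r$ bounds the operator norm of the derivative difference rather than just the value difference. I would handle this by writing $F_1(x^\circ+w)$ as a multilinear expansion in five slots and bounding each monomial of degree $\ge2$ in $w$ by the corresponding coefficient of the scalar polynomial.
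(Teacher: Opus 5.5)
Your proposal is correct and follows the paper's proof: the Newton map $T(x)=x-\Aop F(x)$, the polynomial majorants $M_1$ and $M_2$ obtained from the algebra bounds and the sharp norms of $\KD$ and $N$, $\mathcal R'(r)$ as the bound on the derivative variation, and Banach's theorem combined with the injectivity of $\Aop$. The one difference is the order of steps: you get the self-map estimate by integrating the derivative bound $Z+\mathcal R'(t\norm{w})$ along the segment, whereas the paper bounds the Taylor remainder directly by $M_1(r)+\sigma M_2(r)$. Also, the derivative bound comes from differentiating the multilinear expansion term by term, as you note at the end, not from convexity alone.
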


\begin{proof}
For the Newton map $T(x):=x-\Aop F(x)$ and $\norm{h}_\XX\le r$,
$$
 T(x^\circ+h)-x^\circ
 =-\Aop F(x^\circ)+(I-\Aop DF(x^\circ))h-\Aop\mathcal N(h),
$$
where $\mathcal N$ is the nonlinear Taylor remainder. The algebra inequality, Lemma~\ref{lem:KD-norms}, and
$$
 \norm{\dot g}_\rho\le r,
 \qquad
 |\dot k|\le r/\tau,
 \qquad
 \norm{\dot a}_{\PP_\rho}\le r/(2\sigma)
$$
show that $\norm{\mathcal N_1(h)}_\rho\le M_1(r)$. This follows by expanding
$$
 \left(K_0+\frac r\tau\right)^2
 \left(A_0+\frac r{2\sigma}\right)^2
 \left(U_0+\frac r4\right).
$$
Every coefficient is nonnegative, and the constant and degree-one terms are precisely those subtracted in \eqref{eq:M1}. For the boundary equation, the exact nonlinear remainder is
$
 (N\dot g)^2-|\dot a|^2,
$
so we have that
$$
 \norm{\mathcal N_2(h)}_{\BB_\rho}
 \le\left(\norm{N}^2+\frac1{4\sigma^2}\right)r^2=M_2(r).
$$
Thus $\mathcal R(r)$ bounds the nonlinear contribution after applying $\Aop$. The first inequality in \eqref{eq:radii-conditions} makes $T$ map the closed ball strictly into itself. Since the scalar majorant has nonnegative coefficients, $\mathcal R'(r)$ bounds the derivative variation throughout the ball; the second inequality makes $T$ a strict contraction. Banach's theorem gives a unique fixed point $x_*$. At the fixed point $\Aop F(x_*)=0$, and injectivity of $\Aop$ gives $F(x_*)=0$.
\end{proof}

It remains to construct $x^\circ$ and $\Aop$, prove the injectivity of $\Aop$, and bound $Y$ and $Z$.

\subsection{The numerical centre and finite block}\label{subsec:centre}

The candidate was found by following a $D_{13}$-symmetric branch of solutions with constant Dirichlet and Neumann data. Among the radial solutions normalised by unit flux,
$$
 U_\mu(r)=-\frac{J_0(\mu r)}{\mu J_1(\mu)},
 \qquad
 U_\mu(1)=-\frac{J_0(\mu)}{\mu J_1(\mu)}.
$$
The radial linearisation in \cite[Sec.~3.2]{ColbrookStepaniants2026}, which is unchanged by the unit-flux normalisation, shows that the angular sector $\cos(m\theta)$ becomes singular at zeros of
\begin{equation}\label{eq:wronskian}
 W_{1,m}(\mu)=J_1(\mu)J_m'(\mu)-J_m(\mu)J_1'(\mu).
\end{equation}
The continuation began near the fifth positive zero of $W_{1,13}$,
$
 \mu\approx27.3968010178226,
$
and reached a point at which the Dirichlet datum vanished. The Fourier-circle equivalence gives a second formulation of the numerical search.

\begin{remark}[Fourier moment equations]\label{rem:moment-search}
If a $D_m$-symmetric curve is parametrised in polar form by
$$
 \Gamma(\theta)=\mathfrak r(m\theta)e^{i\theta},
 \qquad
 \mathfrak r(\vartheta)=1+\sum_{j\ge1}\mathfrak r_j\cos(j\vartheta),
$$
then Proposition~\ref{prop:fourier-equivalence} reduces the search to the scalar moment equations
$$
 \frac1{2\pi}\int_0^{2\pi}
 \sqrt{\mathfrak r(\vartheta)^2+m^2\mathfrak r'(\vartheta)^2}\,
 J_{m\ell}\bigl(k\mathfrak r(\vartheta)\bigr)\cos(\ell\vartheta)\,\dd\vartheta=0,
 \qquad \ell\ge0.
$$
The continuation and the moment equations determine the numerical candidate; the proof uses the exact finite centre defined below.
\end{remark}

The normalised centre has
\begin{equation}\label{eq:k-centre}
 k^\circ=\frac{3861571936906315}{140737488355328}=27.438118883838420458\ldots
\end{equation}
and
\begin{equation}\label{eq:a-centre}
 a^\circ(z)=1+\sum_{j=1}^{20}a_j^\circ z^{13j}.
\end{equation}
The leading coefficients, displayed here as decimal approximations, are
\begin{equation}\label{eq:a-leading}
 \begin{aligned}
 a_1^\circ&\approx0.21111353950670197,&
 a_2^\circ&\approx0.15570614413673692,\\
 a_3^\circ&\approx0.040622815540330454,&
 a_4^\circ&\approx0.014382544407999130.
 \end{aligned}
\end{equation}
All centre coefficients, including the $861$ retained $g$ coefficients, are stored as exact hexadecimal binary64 numbers in the certificate. Appendix~\ref{app:coefficients} lists the conformal coefficients in decimal form. Choose
\begin{equation}\label{eq:finite-parameters}
 L=20,
 \qquad
 S=40,
 \qquad
 J_a=20.
\end{equation}
The retained radial indices are $s=0,\ldots,S$. The finite unknowns are
$$
 g_{\ell,s}\quad(0\le\ell\le L,\ 0\le s\le S),
 \qquad k,
 \qquad a_j\quad(1\le j\le J_a),
$$
and the finite residual consists of the matching $F_1$ coefficients and the boundary coefficients $(F_2)_\ell$, $0\le\ell\le J_a$. Hence
\begin{equation}\label{eq:finite-dimension}
 d_{\mathrm{fin}}=(L+1)(S+1)+J_a+1=882.
\end{equation}
Let $J_{\mathrm{fin}}$ be the derivative matrix in the normalised coordinates \eqref{eq:X-scaled}--\eqref{eq:Z-scaled}, and let $R$ be a fixed binary64 approximate inverse. Every entry of $R$ is treated as an exact dyadic rational.

On finite residuals, $\Aop$ acts by $R$. On the complement, an omitted $F_1$ coordinate is paired with the matching $g$ coordinate, while an omitted $F_2$ mode $\ell>J_a$ is paired with the matching shape coordinate with the sign that cancels the principal term $-(\dot a)_\ell$. These tail blocks are signed coordinate isometries.

\begin{lemma}[Injectivity of the approximate inverse]\label{lem:A-injective}
If
$
 \norm{I-RJ_{\mathrm{fin}}}_1<1,
$
then $R$ is invertible and the full operator $\Aop:\ZZ_\rho\to\XX_\rho$, including its two tail blocks, is a bounded isomorphism. Moreover,
\begin{equation}\label{eq:A-norm}
 \norm{\Aop}=\max\{\norm{R}_1,1\}.
\end{equation}
\end{lemma}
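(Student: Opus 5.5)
The plan is to exploit the block structure of $\Aop$ relative to the splitting of $\ZZ_\rho$ and $\XX_\rho$ into a finite part and tails. Write $\ZZ_\rho=\ZZ_{\mathrm{fin}}\oplus\ZZ_{\mathrm{tail}}$, where $\ZZ_{\mathrm{fin}}$ is spanned by the $F_1$ coefficients with $0\le\ell\le L$, $0\le s\le S$, together with the boundary coefficients $(F_2)_\ell$, $0\le\ell\le J_a$. Split $\XX_\rho=\XX_{\mathrm{fin}}\oplus\XX_{\mathrm{tail}}$ in the same way, with $\XX_{\mathrm{fin}}$ spanned by the retained $g_{\ell,s}$, the scalar $k$ and $a_1,\dots,a_{J_a}$. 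By construction $\Aop=R\oplus\Aop_{\mathrm{tail}}$ is block diagonal. The tail block sends the omitted $F_1$ coordinate $(\ell,s)$ to the matching $g_{\ell,s}$ coordinate and the omitted $F_2$ mode $\ell>J_a$ to $\mp$ the shape coordinate $a_\ell$.

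The first step is the finite block. Since $\norm{I-RJ_{\mathrm{fin}}}_1<1$, the Neumann series shows that $RJ_{\mathrm{fin}}$ is invertible. Hence $R$ is surjective on the $882$-dimensional space, and being square, $R$ is invertible. Here $\norm{\cdot}_1$ must be read as the operator norm induced by the weighted norms \eqref{eq:X-scaled}--\eqref{eq:Z-scaled} restricted to the finite coordinates. In scaled coordinates, where each basis vector has unit weight ($c_\ell\rho^\ell$ for $g$ and for $F_1$, $\tau$ for $k$, $2\sigma\rho^j$ for $a_j$, $\sigma c_\ell\rho^\ell$ for $(F_2)_\ell$), this is the maximum absolute column sum. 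I will state this identification explicitly, since it is what makes the $\ell^1$ matrix norm the correct one.

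The second step is the tail block, where I check that it is an isometric bijection. An omitted $F_1$ coordinate $(\ell,s)$ has weight $c_\ell\rho^\ell$ in $\ZZ$, and the matching $g_{\ell,s}$ has the same weight in $\XX$. An omitted $F_2$ mode $\ell>J_a$ has weight $\sigma c_\ell\rho^\ell=2\sigma\rho^\ell$ in $\ZZ$ (using $c_\ell=2$ for $\ell\ge1$), and the matching $a_\ell$ has weight $2\sigma\rho^\ell$ in $\XX$. The weights therefore agree, and this is exactly why the factor $2\sigma$ was chosen. Consequently $\Aop_{\mathrm{tail}}$ maps the tail coordinate vectors bijectively onto the tail coordinate vectors with unit scaled norm. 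Because both norms are weighted $\ell^1$ sums over disjoint coordinates, $\Aop_{\mathrm{tail}}$ is an isometric isomorphism.

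Finally, for a weighted $\ell^1$ direct sum, the norm of a block-diagonal operator is the maximum of the column norms. This gives $\norm{\Aop}=\max\{\norm{R}_1,1\}$: it is bounded above by the maximum over columns and attained on a maximising basis column. The inverse $R^{-1}\oplus\Aop_{\mathrm{tail}}^{-1}$ is bounded by the same reasoning, so $\Aop$ is a bounded isomorphism and in particular injective.

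The main obstacle is bookkeeping rather than analysis. I need to verify that the finite residual and unknown index sets match with the same weights, so that the finite matrix norm really is the induced operator norm, and that the tail index sets correspond bijectively. The delicate point is the range $J_a<\ell\le L$. There the omitted $F_2$ modes pair with shape coordinates $a_\ell$, while the $F_1$ modes with $\ell\le L$, $s>S$ pair with $g$. I would check these correspondences explicitly.
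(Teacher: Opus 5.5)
Your proposal is correct and follows the same route as the paper, which simply invokes the direct-sum Neumann-series argument of the earlier Pompeiu--Schiffer work. That argument has three steps: a Neumann series on the finite block makes $RJ_{\mathrm{fin}}$ invertible, hence also the square matrix $R$; the two tail blocks are signed coordinate isometries because the weights $c_\ell\rho^\ell$ and $2\sigma\rho^\ell$ match; and the $\ell^1$ column-norm formula then gives $\norm{\Aop}=\max\{\norm{R}_1,1\}$.

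Your explicit weight check, in particular reading $\norm{R}_1$ as the scaled column-sum norm, supplies bookkeeping the paper leaves implicit. The range $J_a<\ell\le L$ that you flag is actually empty here, since $J_a=L=20$, and in any case the two tail pairings act on disjoint coordinate families.
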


\begin{proof}
The direct-sum Neumann-series argument of \cite[Secs.~3.1 and~3.6]{ColbrookStepaniants2026} applies with the two signed coordinate-isometric tail blocks used here. It gives the invertibility of $R$ and $J_{\mathrm{fin}}$, the isomorphism property of $\Aop$, and the norm formula \eqref{eq:A-norm}.
\end{proof}

It remains to estimate $\Aop F(x^\circ)$ and $I-\Aop DF(x^\circ)$. Exact recurrences govern the finite components and reveal which tail columns can reach the retained block.

\subsection{Exact coefficient recurrences}

We use the exact disk-polynomial multiplication recurrences of \cite[eqs.~(40)--(42)]{ColbrookStepaniants2026}. In the present notation, for $q\in\mathbb Z$, $s\ge0$, and $n=|q|$, let
$$
 \Psi_{q,s}=r^nP_s^{(0,n)}(2r^2-1)e^{iq\theta},
$$
with $\Psi_{q,-1}=0$, so that $\Phi_{\ell,s}=\Psi_{m\ell,s}$. The cited rational identities give exact expansions under multiplication by $z$, $\overline z$, and $r^2$, independently of the imposed symmetry order. Their iteration expands every monomial $z^{mj_1}\overline z^{mj_2}$ in $|a|^2$. Together with \eqref{eq:KD-harmonic} and \eqref{eq:KD-nonharmonic}, they assemble $F(x^\circ)$ and $DF(x^\circ)$ entirely in the disk-polynomial basis.

\subsection{Tail estimates}\label{subsec:tails}

The omitted $g$-coordinates form the \emph{source tail}, and the omitted $a$-coordinates form the \emph{shape tail}. The recurrences settle every finite interaction. To control the infinitely many omitted coordinates, we isolate the columns that can meet the retained block and derive bounds, monotone in their indices, for all others.

\paragraph{Source tail.} The constants in the source-tail bound are
$$
 A_0=\norm{a^\circ}_{\PP_\rho},
 \qquad
 B_0=\norm{Ng^\circ}_{\BB_\rho},
 \qquad
 \zeta_\ell=\frac1{2(m\ell+1)}\quad(\ell\ge0).
$$
For a normalised real-symmetric source column $(\ell,s)$, the contribution remaining after the identity term satisfies
\begin{equation}\label{eq:g-tail-global}
 Z_g(\ell,s)
 \le (k^\circ)^2A_0^2\kappa_{\ell,s}
 +\mathbf{1}_{\{s=0\}}\,2\sigma B_0\zeta_\ell.
\end{equation}
The first term follows from Corollary~\ref{cor:coefficient-algebra} and \eqref{eq:kappa-column}; the second follows from the trace term $2b_gN\dot g$ in \eqref{eq:DF2}. The only additional contribution comes from its component in the retained residual block, where it is amplified by $R$.

\begin{lemma}[Source-tail support cutoff]\label{lem:g-tail-cutoff}
Assume that $a^\circ$ is supported in $0\le j\le J_a$, where $J_a=20$. A source column has zero component in every retained residual row, in either component of $F$, if
$$
 |\ell|>L+J_a
 \qquad\text{or}\qquad
 s\ge S+mJ_a+2.
$$
Thus there is no interaction with the retained residual block for $\ell\ge41$ or $s\ge302$.
\end{lemma}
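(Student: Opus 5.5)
The plan is to determine, for a single source column $\dot g=\Phi_{\ell,s}$ (symmetrised as in \eqref{eq:real-expansion}), which disk-polynomial modes the column of $DF(x^\circ)$ can reach. We then check that none of these modes lies in the retained residual block, that is, in $F_1$ modes $(\ell',s')$ with $|\ell'|\le L$, $s'\le S$, or in $F_2$ modes $\ell'\le J_a$. By \eqref{eq:DF1}--\eqref{eq:DF2}, only two pieces of the column depend on $\dot g$: $\dot g+k^2|a^\circ|^2\KD\dot g$ in the $F_1$ component, and $2b_{g^\circ}N\dot g$ in the $F_2$ component, where $b_{g^\circ}=Ng^\circ$. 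The identity term is diagonal, so it touches a retained row only when $(\ell,s)$ is itself retained. Hence it suffices to track the angular and radial supports of the other two pieces.

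\textbf{Angular support.} Multiplication by $z^{mj_1}\overline z^{mj_2}$ shifts the angular index by $j_1-j_2$, and $\KD$ preserves it by \eqref{eq:KD-harmonic}--\eqref{eq:KD-nonharmonic}. Since $a^\circ$ is supported in $0\le j\le J_a$, the product $|a^\circ|^2\KD\Phi_{\ell,s}$ has angular indices in $[\ell-J_a,\ell+J_a]$. If $|\ell|>L+J_a$, all of these exceed $L$ in modulus, so the $F_1$ rows vanish. For $F_2$ we use \eqref{eq:N-formula}: $N\Phi_{\ell,s}$ is either $0$ or a multiple of $e^{im\ell\theta}$. Multiplying by $b_{g^\circ}$ spreads the angular index only over the support of $Ng^\circ$, which lies in $|\ell'|\le L$ because $g^\circ$ is retained. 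Thus the $F_2$ contribution sits at angular indices with $|\ell'|\ge|\ell|-L>J_a$, so the $F_2$ rows vanish as well.

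\textbf{Radial support.} Here $s\ge S+mJ_a+2$. For $F_2$, $N\Phi_{\ell,s}=0$ because $s\ge1$, so only $F_1$ needs attention. By \eqref{eq:KD-nonharmonic}, $\KD\Phi_{\ell,s}$ involves radial indices $s-1,s,s+1$. The cited multiplication recurrences give the lowering of radial degree under multiplication by a monomial $z^{p}\overline z^{q}$. Multiplying by $z$ or $\overline z$ changes $(q,s)\mapsto$ combinations with $s'\in\{s-1,s\}$ while $|q|$ changes by one, and multiplying by $r^2$ gives $s'\in\{s-1,s,s+1\}$. The monomial $z^{mj_1}\overline z^{mj_2}$ has total degree at most $2mJ_a$. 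The key claim is that it lowers the radial index by at most $mJ_a$, or more precisely by $\min(mj_1,mj_2)+\ldots$. I would prove this by a degree count. The total polynomial degree $|q|+2s$ of $\Psi_{q,s}$ drops by at most the monomial degree, while the angular index changes by $m(j_1-j_2)$. Together these give $2s'\ge 2s-2mJ_a-2$, so $s'\ge s-mJ_a-1$. Combined with the shift by $1$ from $\KD$, the lowest radial index reached is $s-mJ_a-2\ge S$, hmm. This needs to be strictly greater than $S$, and the exact offset has to match $S+mJ_a+2$.

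The main obstacle is this careful bookkeeping of the minimal radial index. I would settle it using orthogonality. The product $\overline{\Psi_{q',s'}}\,z^{p}\overline z^{q}$ is a polynomial of total degree $|q'|+2s'+p+q$. Since $\Psi_{q,s}$ is orthogonal to all polynomials of degree less than $|q|+2s$ in its angular class, the coefficient vanishes unless $|q|+2s\le|q'|+2s'+p+q$. Using $|q'|\le|q|+|p-q|$ and $p,q\le mJ_a$ then yields the bound $s'\ge s-mJ_a$. Together with the $-1$ from $\KD$ and an extra $-1$ as margin, any $s\ge S+mJ_a+2$ forces $s'>S$, which is the stated cutoff. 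Specialising to $L=20$, $J_a=20$, $S=40$, $m=13$ gives $\ell\ge41$ and $s\ge302$.
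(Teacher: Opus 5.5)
Your proposal is correct and follows essentially the paper's route. The angular part (the shift by $j_1-j_2$ in $F_1$, and the trace support of $Ng^\circ$ in $|\ell'|\le L$ for $F_2$) matches the paper. For the radial cutoff, your total-degree orthogonality of $\Psi_{q,s}$ is equivalent to the paper's factoring of the Jacobi weight $t^{m|\ell|}$ with leftover exponent $0\le E\le mJ_a$.

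One bookkeeping point. The orthogonality bound gives output radial index at least $s-1-mJ_a$. So the $+2$ in $S+mJ_a+2$ is exactly the $\KD$ shift plus the strictness needed for $s'>S$; it is not an extra margin. You should also rename the $\overline z$ exponent, which currently clashes with the angular index $q$.
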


\begin{proof}
For the $F_1$ component, a monomial in $|a^\circ|^2$ changes the symmetry index by at most $J_a$, proving the angular assertion. For the radial assertion, consider a term of radial degree $s'$ in $\KD\Phi_{\ell,s}$, so that $s'\ge s-1$, together with a monomial $z^{mj_1}\overline z^{mj_2}$, where $0\le j_1,j_2\le J_a$. Its output angular index is $\ell_{\mathrm{out}}=\ell+j_1-j_2$. With $t=r^2$, factor the source Jacobi weight $t^{m|\ell|}$ in the coefficient integral against a retained output polynomial of radial degree $p\le S$. The factor multiplying $P_{s'}^{(0,m|\ell|)}(2t-1)$ is $t^E P_p^{(0,m|\ell_{\mathrm{out}}|)}(2t-1)$, where
$$
 E=\frac{m(j_1+j_2)+m|\ell_{\mathrm{out}}|-m|\ell|}{2}.
$$
The triangle inequality gives $0\le E\le mJ_a$, and parity shows that $E$ is an integer. The multiplying polynomial has degree $p+E\le S+mJ_a$, so Jacobi orthogonality makes the coefficient vanish when $s'>S+mJ_a$. Since $s'\ge s-1$, the condition $s-1>S+mJ_a$ suffices.

For the $F_2$ component, let $b_g^\circ:=Ng^\circ$. By \eqref{eq:DF2}, the derivative in a source direction $\dot g$ is $2b_g^\circ N\dot g$. Formula~\eqref{eq:N-formula} gives $N\Phi_{\ell,s}=0$ for every $s\ge1$, so all such source columns have zero $F_2$ derivative. If $s=0$ and $|\ell|\ge41$, the trace of the corresponding real-symmetric source column is supported in boundary modes $\pm\ell$. Since $g^\circ$ has angular support at most $L=20$, the centre trace $b_g^\circ$ is supported in modes of absolute index at most $20$. Every mode in $2b_g^\circ N\dot g$ consequently has absolute index at least
$
 |\ell|-20\ge21.
$
It cannot meet a finite $F_2$ row, whose index has absolute value at most $J_a=20$. This proves the cutoff for both residual components.
\end{proof}

It follows that the only source-tail columns requiring direct evaluation on the retained residual block are
\begin{equation}\label{eq:g-near-set}
 \begin{aligned}
 &0\le\ell\le20,
 &&41\le s\le301,\\
 &21\le\ell\le40,
 &&0\le s\le301.
 \end{aligned}
\end{equation}
There are exactly $11{,}521$ such columns. Beyond this set, \eqref{eq:g-tail-global} is monotone. The entire angular tail is bounded by the two extremal columns $(41,0)$ and $(41,1)$, and the remaining radial tail by $(0,302)$.

\paragraph{Shape tail.} For a shape direction $\dot a=z^{mj}$, let
\begin{equation}\label{eq:Hj}
 H_j:=z^{mj}\overline{a^\circ}\,U^\circ\in\WW_\rho,
 \qquad U^\circ=\KD g^\circ.
\end{equation}
Formulas \eqref{eq:KD-harmonic} and \eqref{eq:KD-nonharmonic} show that $\KD$ preserves the angular index and changes the radial index by at most one. Hence
$$
 L_U:=L=20,
 \qquad S_U:=S+1=41,
$$
and $U^\circ$ is supported in $|\ell|\le L_U$, $0\le s\le S_U$. The corresponding derivative is
$$
 DF_1(x^\circ)[0,0,z^{mj}]
 =(k^\circ)^2(H_j+\overline{H_j}).
$$

\begin{lemma}[Shape-column support]\label{lem:shape-support}
Suppose that $a^\circ$ is supported in $0\le j_0\le J_a$, while $U^\circ$ is supported in $|\ell|\le L_U$ and $0\le s\le S_U$. A monomial contribution
$
 z^{mj}\overline z^{mj_0}\Phi_{\ell,s}
$
has output angular index
$
 \ell_{\mathrm{out}}=j-j_0+\ell
$
and output radial Jacobi degree at most $s+d_{\mathrm{sh}}$, where
\begin{equation}\label{eq:shape-d}
 d_{\mathrm{sh}}=\frac m2\bigl(|\ell|+j+j_0-|\ell_{\mathrm{out}}|\bigr)
 \le m(j_0+|\ell|)
 \le m(J_a+L_U).
\end{equation}
Consequently every shape column has radial support at most
$
 S_U+m(J_a+L_U).
$
For $J_a=L_U=20$, $S_U=41$, and $m=13$, the exact cutoff is $561$. For $21\le j\le60$, the angular support lies in $|\ell_{\mathrm{out}}|\le80$.
\end{lemma}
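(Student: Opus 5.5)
The proof will mirror the radial argument in Lemma~\ref{lem:g-tail-cutoff}, with the roles of source and monomial adjusted. First we fix a term $z^{mj}\overline z^{mj_0}\Phi_{\ell,s}$ with $0\le j_0\le J_a$, $|\ell|\le L_U$, and $0\le s\le S_U$. We set $n=m|\ell|$, $n_{\mathrm{out}}=m|\ell_{\mathrm{out}}|$, and $t=r^2$.

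The angular statement is immediate. Multiplying by $e^{imj\theta}e^{-imj_0\theta}$ shifts the angular index from $\ell$ to $\ell+j-j_0$.

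For the radial degree, we write the product as
$$
r^{n+mj+mj_0}P_s^{(0,n)}(2t-1)e^{im\ell_{\mathrm{out}}\theta}
= r^{n_{\mathrm{out}}}\,t^{d_{\mathrm{sh}}}P_s^{(0,n)}(2t-1)\,e^{im\ell_{\mathrm{out}}\theta},
$$
with $d_{\mathrm{sh}}=\tfrac12(n+mj+mj_0-n_{\mathrm{out}})$. This exponent is a nonnegative integer, for two reasons. The triangle inequality gives $|\ell_{\mathrm{out}}|\le|\ell|+j+j_0$. The integers $|\ell|+j+j_0$ and $|\ell_{\mathrm{out}}|$ have the same parity, since $\ell_{\mathrm{out}}\equiv\ell+j+j_0\pmod 2$.

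Next we expand the polynomial $t^{d_{\mathrm{sh}}}P_s^{(0,n)}(2t-1)$, which has degree $s+d_{\mathrm{sh}}$, in the basis $\{P_p^{(0,n_{\mathrm{out}})}(2t-1)\}$. This is a change of basis within polynomials of bounded degree, so every output coefficient with $p>s+d_{\mathrm{sh}}$ vanishes. One could also phrase this through Jacobi orthogonality, as in Lemma~\ref{lem:g-tail-cutoff}, but the degree count is enough.

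For the upper bound, we apply the reverse triangle inequality $|\ell_{\mathrm{out}}|\ge j-j_0-|\ell|$. This gives
$$
2d_{\mathrm{sh}}/m=|\ell|+j+j_0-|\ell_{\mathrm{out}}|\le 2(j_0+|\ell|),
$$
and therefore $d_{\mathrm{sh}}\le m(j_0+|\ell|)\le m(J_a+L_U)$.

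A shape column is $(k^\circ)^2(H_j+\overline{H_j})$. Conjugation preserves the radial degree, and $\overline{a^\circ}U^\circ$ is a finite sum of such monomial terms. The radial support is therefore at most $S_U+m(J_a+L_U)=41+13\cdot40=561$.

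For $21\le j\le 60$ we have $|\ell_{\mathrm{out}}|\le j+j_0+|\ell|$. I expect the only delicate point to be the final angular bound, and the estimate that actually holds is sharper than this triangle bound. The terms of $a^\circ$ enter conjugated, so the monomial shifts the index by $j-j_0\in[j-20,j]$, and $\ell$ adds at most $\pm20$. This gives $\ell_{\mathrm{out}}\in[j-40,\,j+20]$. For $j\le60$ this yields $|\ell_{\mathrm{out}}|\le80$. The lower end $j-40\ge-19$ stays within that bound as well.
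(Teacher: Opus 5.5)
Your proposal is correct and follows the paper's proof. You factor $r^{m|\ell_{\mathrm{out}}|}$ out of $r^{m(|\ell|+j+j_0)}$, use the triangle inequality and parity to show that the leftover power $t^{d_{\mathrm{sh}}}$ has a nonnegative integer exponent, count degrees in the Jacobi expansion, and bound $d_{\mathrm{sh}}$ by the reverse triangle inequality. Your signed computation $\ell_{\mathrm{out}}\in[j-40,\,j+20]$, in place of the cruder $|\ell_{\mathrm{out}}|\le j+j_0+|\ell|\le 100$, correctly fills in what the paper calls immediate from the support ranges: the angular bound $|\ell_{\mathrm{out}}|\le 80$.
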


\begin{proof}
With $t=r^2$, the expression
$
t^{m(|\ell|+j+j_0)/2}P_s^{(0,m|\ell|)}(2t-1)
$
contains the output factor $t^{m|\ell_{\mathrm{out}}|/2}$. The remaining power is $t^{d_{\mathrm{sh}}}$, so the Jacobi expansion has degree at most $s+d_{\mathrm{sh}}$. Nonnegativity and integrality of $d_{\mathrm{sh}}$ follow from the triangle inequality and parity. Finally, we obtain
$$
 |\ell|+j+j_0-|\ell_{\mathrm{out}}|\le2(j_0+|\ell|),
$$
which proves \eqref{eq:shape-d}; the angular assertion is immediate from the support ranges.
\end{proof}

For $j\ge41$, the weighted-shift argument of \cite[Sec.~3.5]{ColbrookStepaniants2026} applies: every angular index in $H_j$ is positive and
$
 H_{j+1}=z^mH_j.
$
Therefore, we obtain the bound
\begin{equation}\label{eq:H-monotone}
 \rho^{-(j+1)}\norm{H_{j+1}}_\rho
 \le\rho^{-j}\norm{H_j}_\rho.
\end{equation}
For $j\ge61=J_a+L_U+L+1$, $H_j$ has no component in the retained residual block. The residual boundary coefficients after cancellation of the principal $-(\dot a)_j$ have exact normalised norm
\begin{equation}\label{eq:A-minus}
 A_-:=\sum_{j_0=1}^{J_a}|a_{j_0}^\circ|\rho^{-j_0}.
\end{equation}
Thus, we have
\begin{equation}\label{eq:shape-far}
 Z_a(j)
 \le\frac{(k^\circ)^2}{\sigma\rho^{41}}\norm{H_{41}}_\rho+A_-,
 \qquad j\ge61.
\end{equation}
The forty columns $21\le j\le60$ are evaluated directly over their exact support $|\ell|\le80$, $s\le561$; \eqref{eq:H-monotone}--\eqref{eq:shape-far} bound all remaining columns.

\paragraph{Residual and finite columns.} The centre residual has finite support. The shape has angular support $20$, while $U^\circ$ has angular support $20$ and radial support $41$. A monomial of $|a^\circ|^2U^\circ$ increases the radial Jacobi degree by at most
$$
 \frac m2\bigl(|\ell|+j_1+j_2-|\ell+j_1-j_2|\bigr)
 \le m\max\{j_1,j_2\}\le260.
$$
Hence $F_1(x^\circ)$ has angular support $40$ and radial degree at most $301$, and $F_2(x^\circ)$ has angular support $40$.

The finite $g$- and $k$-columns obey the same $F_1$ cutoff, while the finite shape columns are supported in $|\ell|\le40$, $s\le561$ by Lemma~\ref{lem:shape-support}. Thus the omitted part of every residual and every finite derivative column has finite support that can be enumerated explicitly. Together with \eqref{eq:g-near-set} and the shape partition above, these cases cover every residual row and every derivative column. These support partitions reduce the residual and derivative defect to finitely many interval evaluations and explicit monotone tail bounds.

\subsection{Directed-rounding validation}\label{subsec:directed-validation}

The recurrences are evaluated in the verified floating-point framework of~\cite{Rump2010}, using directed MPFR rounding~\cite{FousseEtAl2007} at two precision settings: twice at $256$ bits and once at $192$ bits with additional rounding guards. The two $256$-bit outputs agree byte for byte, and each corresponding $256$-bit enclosure is contained in the $192$-bit enclosure. The centre and the dense approximate inverse are exact dyadic inputs.

The finite matrix calculation applies the fixed matrix $R$ by a fixed-order binary64 fused multiply-add computation in the IEEE~754 model~\cite{IEEE7542019}; standard forward-error analysis encloses the result~\cite{Higham2002}. The forward-error constants are
$$
 u_{\mathrm{mach}}=2^{-53},
 \qquad
 \gamma_n=\frac{n\,u_{\mathrm{mach}}}{1-n\,u_{\mathrm{mach}}},
 \qquad
 \beta=2^{-1022},
 \qquad
 \delta_n=\frac{n\beta}{1-n\,u_{\mathrm{mach}}}.
$$
Suppose that $n\,u_{\mathrm{mach}}<1/2$. If $\Sigma_n=\sum_{i=1}^n v_iw_i$ with $v_iw_i\ge0$, and $\widehat\Sigma_n$ is the corresponding fixed-order binary64 FMA sum, the enclosure proved in \cite[Appendix~A.2]{ColbrookStepaniants2026}, in the present notation, is
\begin{equation}\label{eq:FMA-bound}
 \Sigma_n\le\frac{\widehat\Sigma_n+\delta_n}{1-\gamma_n}.
\end{equation}

For signed midpoint products, the computation separately encloses the sum of absolute products and propagates interval radii through $|R|$. The largest dot product has $882$ terms, so the condition $n\,u_{\mathrm{mach}}<1/2$ is automatic. The implementation rejects overflow, preserves exact zeros, and rounds subnormal midpoint-radius conversions outward; see Appendix~\ref{app:certificate}.

Combining the analytic majorants with these interval enclosures gives the quantitative conclusion from which the domain will be reconstructed.

\begin{theorem}[Existence and localisation]\label{thm:validated-zero}
With the parameters \eqref{eq:scales} and \eqref{eq:finite-parameters}, the operator $F$ has a unique zero $x_*=(g_*,k_*,a_*)$ in the closed $\XX$-ball of radius
$
 r=10^{-6}
$
about the exact dyadic centre $x^\circ$. The certified bounds include
\begin{align}
 Y&\le3.621873700919759\times10^{-9},
 \label{eq:Y-certified}\\
 Z&\le0.4732202748897342,
 \label{eq:Z-certified}\\
 \norm{\Aop}&\le642.8791637193662,
 \label{eq:A-certified}\\
 \norm{I-RJ_{\mathrm{fin}}}_1
 &\le4.367597503113863\times10^{-8}.
 \label{eq:inverse-defect-certified}
\end{align}
At $r=10^{-6}$,
\begin{align}
 p(r)&\le-4.890300069750544\times10^{-7}<0,
 \label{eq:p-certified}\\
 Z+\mathcal R'(r)&\le0.5414759642771006<1.
 \label{eq:contraction-certified}
\end{align}
Moreover,
\begin{align}
 k_*&\le27.438119883838421,
 \label{eq:k-upper-certified}\\
 \norm{a_*}_{\PP_\rho}&\le1.442055270128376,
 \label{eq:a-norm-certified}\\
 \norm{\KD g_*}_\rho&\le0.689578707902389,
 \label{eq:U-norm-certified}\\
 \Ree a_*(z)&\ge0.5727406866131367
 \qquad(|z|\le1),
 \label{eq:Re-a-certified}\\
 Ng_*(e^{i\theta})&\ge0.6058177124928927
 \qquad(\theta\in\R),
 \label{eq:trace-certified}\\
 |(a_*)_1|&\ge0.2111135329707542.
 \label{eq:a1-certified}
\end{align}
\end{theorem}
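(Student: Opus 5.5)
The plan is to apply Theorem~\ref{thm:radii} with the exact dyadic centre $x^\circ$ of Section~\ref{subsec:centre}, the approximate inverse $\Aop$ built from $R$ and the two signed tail isometries, and $r=10^{-6}$. Everything reduces to certifying finitely many explicit inequalities.

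The first step is injectivity. I would bound $\norm{I-RJ_{\mathrm{fin}}}_1$ by an interval evaluation of the $882\times882$ product, using \eqref{eq:FMA-bound} for the floating-point error, to obtain \eqref{eq:inverse-defect-certified}. Lemma~\ref{lem:A-injective} then gives that $\Aop$ is an isomorphism and that $\norm{\Aop}=\max\{\norm R_1,1\}$, which is \eqref{eq:A-certified}.

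The second step is the residual bound $Y$. Since $F(x^\circ)$ has finite support (angular index $\le40$, radial degree $\le301$ for $F_1$; angular index $\le40$ for $F_2$), it can be assembled exactly from the multiplication recurrences and \eqref{eq:KD-harmonic}--\eqref{eq:KD-nonharmonic}. Apply $R$ to the retained rows and the signed isometries to the rest, then sum the weighted norms under directed rounding.

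The third step is the derivative defect $Z$, taken as the supremum over normalised columns of $I-\Aop DF(x^\circ)$. I would split the columns into four classes.
\begin{itemize}[label=$\circ$]
\item Finite columns contribute the residual of $RJ_{\mathrm{fin}}$ together with their omitted supports; both are finite by Lemma~\ref{lem:shape-support}.
\item The $11{,}521$ near source-tail columns of \eqref{eq:g-near-set} are evaluated directly, with the $R$-amplified retained part added to \eqref{eq:g-tail-global}.
\item All other source columns are bounded by the monotone majorant \eqref{eq:g-tail-global} at the extremal columns $(41,0)$, $(41,1)$ and $(0,302)$. Lemma~\ref{lem:g-tail-cutoff} shows these columns do not reach the retained block.
\item The shape columns $21\le j\le60$ are evaluated over their exact support. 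Those with $j\ge61$ are bounded by \eqref{eq:shape-far}, using the weighted-shift monotonicity \eqref{eq:H-monotone} and $A_-$.
\end{itemize}
Taking the maximum gives \eqref{eq:Z-certified}.

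With $A_0$, $U_0$ and $K_0$ enclosed, the polynomials $p(r)$ and $Z+\mathcal R'(r)$ are evaluated at $r=10^{-6}$, which yields \eqref{eq:p-certified}--\eqref{eq:contraction-certified} and hence existence and uniqueness.

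The localisation bounds follow from the ball radius and the norms \eqref{eq:X-scaled}.
\begin{itemize}[label=$\circ$]
\item From $|k_*-k^\circ|\le r/\tau$ we get \eqref{eq:k-upper-certified}.
\item From $\norm{a_*-a^\circ}_{\PP_\rho}\le r/(2\sigma)$ we get \eqref{eq:a-norm-certified} and \eqref{eq:a1-certified}. Since $|a_1-a_1^\circ|\le\rho^{-1}r/(2\sigma)$, the latter is immediate.
\item From $\norm{\KD}=1/4$ we get $\norm{\KD g_*}_\rho\le U_0+r/4$, which is \eqref{eq:U-norm-certified}.
\item For \eqref{eq:Re-a-certified}, take a lower bound for $\Ree a^\circ$ on $\overline\D$ (minimum modulus principle for the harmonic function $\Ree a^\circ$, hence a rigorous minimisation on $\T$ by interval subdivision of a trigonometric polynomial with a derivative bound) and subtract the sup-norm perturbation, which is at most $r/(2\sigma)$.
\item For \eqref{eq:trace-certified}, minimise the trigonometric polynomial $Ng^\circ$ on $\T$ in the same way and subtract $\norm N\,r=r/2$, using the embedding $\BB_\rho\hookrightarrow C(\T)$.
\end{itemize}

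I expect the main obstacle to be the $Z$ bound, and in particular making the tail majorants rigorous and uniform. The plan is to check that \eqref{eq:g-tail-global} is decreasing in $\ell$ and $s$ (via $\kappa_{\ell,s}$ and $\zeta_\ell$), and that the support cutoffs exactly separate the columns evaluated directly from those handled by majorants. The $11{,}521$ and forty direct columns are a large but routine interval computation. The delicate part is bookkeeping, so that no column or residual row escapes both treatments.
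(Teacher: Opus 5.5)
Your proposal follows essentially the same route as the paper. It applies the radii criterion of Theorem~\ref{thm:radii}, obtains injectivity and $\norm{\Aop}$ from Lemma~\ref{lem:A-injective}, partitions columns by support with monotone source- and shape-tail majorants, and reads off the localisation bounds from the $\XX$-ball radius. The one minor difference is that the paper gets \eqref{eq:Re-a-certified} from the cruder coefficient bound $1-\sum_j|a_j^\circ|-r/(2\sigma)$, via \eqref{eq:centre-l1} and \eqref{eq:shape-ball}, which reproduces the certified constant exactly; your subdivision minimisation of $\Ree a^\circ$ on $\T$ is therefore valid but unnecessary.
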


\begin{proof}
The radii criterion requires a residual bound, a bound for the defect of the full derivative including both tails, and injectivity of the approximate inverse. The interval computation supplies the first two in \eqref{eq:Y-certified}--\eqref{eq:Z-certified}; the finite inverse defect \eqref{eq:inverse-defect-certified} and Lemma~\ref{lem:A-injective} give the third. Equations \eqref{eq:p-certified}--\eqref{eq:contraction-certified} therefore yield the unique zero. The remaining estimates follow by interval evaluation over the coefficient ball and are recomputed in exact rational arithmetic by \texttt{verify\_certificate.py}. Appendix~\ref{app:certificate} gives the contribution to each bound and the data from which it is obtained.
\end{proof}

\section{Reconstruction of the domain and eigenfunction}\label{sec:reconstruction}

The validated zero $x_*=(g_*,k_*,a_*)$ supplied by Theorem~\ref{thm:validated-zero} lies in the coefficient space. Let $\phi_*$ satisfy $\phi_*'=a_*$ and $\phi_*(0)=0$. It remains to establish the geometry of its image, recover the unsquared boundary condition, continue the disc solution analytically through the boundary, and prove that the resulting eigenfunction changes sign.

\subsection{Univalence and quantitative geometry}

The centre satisfies
\begin{equation}\label{eq:centre-l1}
 \sum_{j=1}^{20}|a_j^\circ|<0.4272593067201966.
\end{equation}
The radius bound and the shape scaling give
\begin{equation}\label{eq:shape-ball}
 \norm{a_*-a^\circ}_{\PP_\rho}
 \le\frac{10^{-6}}{2\sigma}
 =\frac{10^{-6}}{150}.
\end{equation}
Combining \eqref{eq:centre-l1} and \eqref{eq:shape-ball} gives \eqref{eq:Re-a-certified}. The elementary Noshiro--Warschawski criterion used in \cite[Sec.~4.2]{ColbrookStepaniants2026} states that a holomorphic map on a convex domain is univalent whenever the real part of its derivative is strictly positive.

Since $\rho>1$, $a_*$ is holomorphic on $|z|<\rho^{1/13}$; the strict lower bound in \eqref{eq:Re-a-certified} persists on a slightly larger disc. Hence $\phi_*$ is univalent on a neighbourhood of $\overline\D$. With $\Omega:=\phi_*(\D)$, the boundary $\partial\Omega=\phi_*(\T)$ is a real-analytic Jordan curve.

The finite centre map is
\begin{equation}\label{eq:phi-centre}
 \phi^\circ(z)=z+\sum_{j=1}^{20}\frac{a_j^\circ}{13j+1}z^{13j+1}.
\end{equation}
For $|z|\le1$, \eqref{eq:shape-ball} gives the rigorous pointwise estimate
\begin{equation}
 |\phi_*(z)-\phi^\circ(z)|
 \le\sum_{j\ge1}\frac{|(a_*)_j-a_j^\circ|}{13j+1}
 \le\frac1{14\rho}\norm{a_*-a^\circ}_{\PP_\rho}
 \le4.6685340803\times10^{-10}.
 \label{eq:map-centre-error}
\end{equation}
Thus the curve in Figure~\ref{fig:domain} differs pointwise from the exact boundary by less than $4.67\times10^{-10}$.

The exact map $\phi_*$ has $D_{13}$ symmetry. The noncircularity argument of \cite[Sec.~4.2]{ColbrookStepaniants2026} applies: rotational symmetry would centre any disc at the origin, while uniqueness of the normalised Riemann map would then force $\phi_*$ to be the identity, contrary to \eqref{eq:a1-certified}. Hence $\Omega$ is not a disc.

The domain is also not centrally symmetric. A bounded centrally symmetric set has a unique centre, and the $13$-fold rotational symmetry would force that centre to be the origin. Central symmetry would then imply that
$
 \phi^\sharp(z):=-\phi_*(-z)
$
is another conformal map from $\D$ onto $\Omega$ satisfying $\phi^\sharp(0)=0$ and $(\phi^\sharp)'(0)=1$. Uniqueness of the normalised Riemann map would give $\phi^\sharp=\phi_*$, so $\phi_*$ would be odd. But the coefficient of $z^{14}$ in $\phi_*$ is $(a_*)_1/14$, which is nonzero by \eqref{eq:a1-certified}. This contradiction proves the claim.

\subsection{Recovery of the boundary condition}

The geometric argument leaves only the sign lost on squaring the boundary equation. Let $U_*=\KD g_*$. Proposition~\ref{prop:weak-KD} gives
$$
 \Delta U_*=g_*,
 \qquad
 U_*|_{\T}=0.
$$
The first fixed-disc equation gives
$$
 \Delta U_*+k_*^2|a_*|^2U_*=0.
$$
The second equation gives $(Ng_*)^2=|a_*|^2$. By \eqref{eq:trace-certified},
$$
 \inf_{\theta\in\R}Ng_*(e^{i\theta})
 \ge0.6058177124928927>0,
$$
while \eqref{eq:Re-a-certified} implies that $a_*$ has no zeros. Hence
$$
 Ng_*=|a_*|
 \quad\text{on }\T.
$$
Proposition~\ref{prop:fixed-disc-equivalence} therefore applies with $\varepsilon=1$, yielding
$
 u_*=U_*\circ\phi_*^{-1}
$
as a solution of \eqref{eq:main-pde}. The coefficient ball also gives
$
 |k_*-k^\circ|\le10^{-6}.
$
which together with \eqref{eq:k-centre}, implies the outward-rounded interval \eqref{eq:k-main}.

\subsection{Analytic regularity and sign change}

The analytic-continuation argument of \cite[Sec.~4.3]{ColbrookStepaniants2026} applies here with
$$
 c_*(x,y):=k_*^2|a_*(x+iy)|^2.
$$
Fix $q>2$. By Proposition~\ref{prop:weak-KD}, $U_*\in W^{2,q}(\D)\cap W^{1,q}_0(\D)$. The coefficient $c_*$ is real analytic on a neighbourhood of $\overline\D$, and interior analytic elliptic regularity gives $U_*\in C^\omega(\D)$~\cite{MorreyNirenberg1957}. Since $a_*$ has no zeros, $|a_*|$ is positive and real analytic on a collar of $\T$. Smooth boundary regularity gives $U_*\in C^\infty(\overline\D)$~\cite{AgmonDouglisNirenberg1959}. The Cauchy--Kowalevski theorem, applied in analytic boundary coordinates to
$$
 \widetilde U|_{\T}=0,
 \qquad
 \left.\partial_r\widetilde U\right|_{\T}=|a_*|,
 \qquad
 (\Delta+c_*)\widetilde U=0,
$$
produces a real-analytic solution on a two-sided collar of $\T$; see also~\cite{Millar1980}. The zero-extension and analytic-uniqueness step in \cite[Sec.~4.3]{ColbrookStepaniants2026}, now with these Cauchy data, identifies $\widetilde U$ with $U_*$ on the inner half-collar. Thus $U_*$ extends real analytically across $\T$.

Since $\phi_*$ is univalent on a complex neighbourhood of $\overline\D$, its inverse is holomorphic on a neighbourhood of $\overline\Omega$, after shrinking the neighbourhoods if necessary. Consequently
$$
 u_*=U_*\circ\phi_*^{-1}\in C^\omega(\overline\Omega).
$$

The eigenfunction $u_*$ must change sign. Otherwise it would be a first Dirichlet eigenfunction and would have a strict sign in the interior. After changing its sign if necessary, Serrin's theorem would force $\Omega$ to be a disc~\cite{Serrin1971}, contradicting the preceding noncircularity argument.

\begin{proof}[Proof of Theorem~\ref{thm:main}]
Theorem~\ref{thm:validated-zero} and the reconstruction above give a bounded simply connected $D_{13}$-invariant domain with real-analytic Jordan boundary, the frequency interval \eqref{eq:k-main}, and a real-analytic solution of \eqref{eq:main-pde}. The nonzero coefficient $(a_*)_1$ shows that the domain is neither a disc nor centrally symmetric, and Serrin's theorem gives the sign change. Proposition~\ref{prop:fourier-equivalence} gives \eqref{eq:boundary-fourier-main}. This is a counterexample to the planar Berenstein conjecture.
\end{proof}

\vspace{4mm}

\paragraph{\textbf{Acknowledgments.}} 
We thank the David Crighton Fund for providing a fellowship for SS to visit MJC in Cambridge. We also thank Miles Wheeler for discussions regarding~\cite{Wheeler2025}. GS is supported by an
NSF Mathematical Sciences Postdoctoral Research Fellowship (MSPRF) under award number 2402074.

\vspace{4mm}

\paragraph{\textbf{AI declaration.}}
We believe it is important to declare the use of AI in mathematical research, and in the present case its role is particularly noteworthy. ChatGPT 5.6 was given a substantial warm start consisting of an early draft and working code independently developed by MJC and GS for \cite{ColbrookStepaniants2026}. That material already contained the central construction, conformal
fixed-disc formulation, coefficient spaces, disk-polynomial algebra,
tail strategy, and computational architecture on which the present
paper rests. The system was used to explore the modifications required
at the complementary Dirichlet endpoint and suggested ideas contributing
to early versions of Lemma~\ref{lem:KD-exact} and
Proposition~\ref{prop:fixed-disc-equivalence}, as well as to a revised
version of the code. We subsequently developed, refined, and
independently checked all resulting arguments and computations. The
authors take full responsibility for all content.

\appendix

\section{The computational certificate}\label{app:certificate}

This appendix specifies the exact data and the independent checks underlying the finite bounds in Theorem~\ref{thm:validated-zero}.

\subsection{Archive, exact inputs, and reproduction}

The supplementary computational certificate is contained in the GitHub repository directory
\begin{center}
\texttt{berenstein\_validation\_certificate/}.
\end{center}
The relevant SHA-256 digests are listed below. File paths are relative to the root of the certificate package.
\begin{center}
\small
\begin{tabular}{@{}l@{}}
\toprule
\texttt{berenstein\_validated\_certificate.zip}\\
\texttt{ed616db715a5f86256cd1b2d0b4db73f50d280eb68e2fbe08a3d3c2da7001aee}\\
\addlinespace[.5em]

\texttt{data/center\_L20\_S40\_J20.hex}\\
\texttt{6edbcf1b90ffcab5a7d198fd7f5d76199715e81f52d98f675d830fd5cd3187a4}\\
\addlinespace[.5em]

\texttt{data/approx\_inverse\_sigma75.bin}\\
\texttt{da18ec4b0622f80ea1b1192ecc28b0f27ec53d51ec6cf11b1484e9ec4f339d0b}\\
\addlinespace[.5em]

\texttt{source\_manifest.txt}\\
\texttt{f7b820b0307c5251bab14d6b80c6bd47da8c78acfaf4264f060df2d1295eded9}\\
\addlinespace[.5em]

\texttt{audit\_mpfr\_192.json}\\
\texttt{f88ae47a4602b6111d2b15a29e1b3790c57093ed9b677ab5003e19b7d023b205}\\
\addlinespace[.5em]

\texttt{audit\_mpfr\_256a.json}\\
\texttt{ae5f96cda2c16b2e3b1fd5f2836946eecd2c7aa23a3f2fb7475ffdca96ebf9b5}\\
\addlinespace[.5em]

\texttt{audit\_mpfr\_256b.json}\\
\texttt{ae5f96cda2c16b2e3b1fd5f2836946eecd2c7aa23a3f2fb7475ffdca96ebf9b5}\\
\addlinespace[.5em]

\texttt{certificate.json}\\
\texttt{f43bf6525b51cdfff26599a81d34b12fbec7da285dd99ab770c418076a286cca}\\
\addlinespace[.5em]

\texttt{SHA256SUMS}\\
\texttt{a5eaf48cf4e6b85ae4b0970ec44e3a227d7cfe66851eab2571d04ace07f8e98f}\\
\bottomrule
\end{tabular}
\end{center}
The internal file \texttt{SHA256SUMS} lists the SHA-256 digest of every other file in the certificate package. The standalone verifier checks that the manifest, the expected package paths, and the files present agree exactly.

\paragraph{Reproduction.} After extraction into a new directory, the command
\begin{verbatim}
THREADS=24 ./reproduce.sh
\end{verbatim}
performs the directed-rounding computation. The reference execution used GCC~11.5.0 (Red Hat 11.5.0-14, 20240719) and MPFR~4.2.1 with thread-local state, explicit RNDD/RNDU rounding, and $24$ threads. It produced one $192$-bit run with additional rounding guards and two $256$-bit runs. The two $256$-bit output files agree byte for byte, and their intervals are contained in those of the guarded $192$-bit run.

\paragraph{Independent verification.} Exact rational aggregation is verified by
\begin{verbatim}
python3 verify_certificate.py certificate.json
\end{verbatim}
After checking the package and source manifests, the input and audit identities, support counts, interval nestings, radii inequalities, and geometric inequalities, the verifier returns the terminal message
\begin{verbatim}
AUTHENTICATED BERENSTEIN D13 CERTIFICATE: PROVED
\end{verbatim}

The directed-rounding computation produces the interval enclosures, which the checker aggregates exactly. Combined with the analytic estimates above, these calculations prove the bounds in Theorem~\ref{thm:validated-zero}.

\subsection{Certified decomposition}

Table~\ref{tab:certificate-bounds} contains the bounds from either of the two byte-identical $256$-bit runs.

\begin{table}[t]
\centering
\small
\caption{Certified bounds used in the contraction and reconstruction arguments.}
\label{tab:certificate-bounds}
\begin{tabular}{@{}>{\raggedright\arraybackslash}p{.61\textwidth}>{\raggedleft\arraybackslash}p{.31\textwidth}@{}}
\toprule
Quantity & Certified bound\\
\midrule
Finite residual after the fixed inverse & $3.082024185794305\times10^{-9}$\\
Omitted $F_1$ residual & $2.471737072026991\times10^{-10}$\\
Omitted $F_2$ residual & $2.926758079227554\times10^{-10}$\\
Global residual bound $Y$ & $3.621873700919759\times10^{-9}$\\
Finite inverse defect $\norm{I-RJ_{\mathrm{fin}}}_1$ & $4.367597503113863\times10^{-8}$\\
Largest finite $g$-column defect & $0.06337480546348755$\\
Finite $k$-column defect & $3.720437321313102\times10^{-9}$\\
Largest finite shape-column defect & $0.00937648688769097$\\
Largest source-tail column meeting the retained block & $0.4593799524396485$\\
Remaining angular source tail, $s=0$ & $0.2050085510518364$\\
Largest shape column, $21\le j\le40$ & $0.4732202748897342$\\
Largest shape column, $41\le j\le60$ & $0.4506521964853179$\\
Remaining shape tail, $j\ge61$ & $0.4506521964853019$\\
Global derivative defect $Z$ & $0.4732202748897342$\\
Approximate-inverse norm $\norm{\Aop}$ & $642.8791637193662$\\
Radii polynomial $p(10^{-6})$ & $-4.890300069750544\times10^{-7}$\\
Contraction bound & $0.5414759642771006$\\
Lower bound for $\Ree a_*$ & $0.5727406866131367$\\
Lower bound for $Ng_*$ & $0.6058177124928927$\\
Lower bound for $|(a_*)_1|$ & $0.2111135329707542$\\
\bottomrule
\end{tabular}
\end{table}

The support calculation gives the following counts and cutoffs:
$$
 \begin{array}{c|c}
 \text{part of the calculation}&\text{enumeration or cutoff}\\ \hline
 \text{source-tail columns evaluated directly}&11{,}521\text{ columns}\\
 \text{remaining angular source columns}&\ell\ge41\\
 \text{remaining radial source columns}&s\ge302\\
 \text{shape-tail columns evaluated directly}&21\le j\le60\\
 \text{shape angular support}&|\ell|\le80\\
 \text{shape radial support}&s\le561\\
 \text{remaining shape columns}&j\ge61.
 \end{array}
$$
At the centre, the $F_1$ residual support is contained in $|\ell|\le40$, $s\le301$, and the $F_2$ residual support in $|\ell|\le40$.

\subsection{Analytic and computational inputs}

The following table gives the justification for each ingredient of the proof.

\begin{center}
\small
\begin{tabular}{@{}>{\raggedright\arraybackslash}p{.30\textwidth}>{\raggedright\arraybackslash}p{.61\textwidth}@{}}
\toprule
Ingredient & Justification\\
\midrule
Disk-polynomial identities & Lemma~\ref{lem:KD-exact}, the recurrences in \cite[Sec.~3.3]{ColbrookStepaniants2026}, and exact rational-polynomial tests in \texttt{tests/symbolic.py}.\\
Finite residual and Jacobian & MPFR recurrence evaluation with directed rounding in \texttt{src/interval\_assemble.cpp}; the fixed dyadic inverse is enclosed by \eqref{eq:FMA-bound}.\\
Finite inverse defect & Complete $882\times882$ column-sum enclosure with fixed ordering.\\
Omitted residual and finite-column rows & Complete recurrence-generated support lists, reconstructed and checked independently.\\
Source tail & The $11{,}521$ columns in \eqref{eq:g-near-set}, followed by \eqref{eq:g-tail-global} and monotonicity.\\
Shape tail & Forty columns through the exact support $|\ell|\le80$, $s\le561$, followed by \eqref{eq:H-monotone}--\eqref{eq:shape-far}.\\
Nonlinearity & Exact polynomial majorant \eqref{eq:M1}--\eqref{eq:R-majorant}, algebra constant one, and the sharp norms \eqref{eq:sharp-KD-N}.\\
Radii inequalities & Exact rational recomputation by \texttt{verify\_certificate.py}.\\
Geometry & Coefficient-ball estimates \eqref{eq:shape-ball}, \eqref{eq:map-centre-error}, and the certified trace and first-shape bounds.\\
Reproducibility & Two byte-identical $256$-bit runs, with intervals contained in those of the guarded $192$-bit run.\\
\bottomrule
\end{tabular}
\end{center}

\section{Conformal coefficients of the numerical centre}\label{app:coefficients}

The conformal-map coefficients $\phi_j^\circ:=a_j^\circ/(13j+1)$ in
$$
 \phi^\circ(z)=z+\sum_{j=1}^{20}\phi_j^\circ z^{13j+1}
$$
are listed below in decimal form; the certificate uses the exact hexadecimal centre coefficients $a_j^\circ$.

\begin{longtable}{@{}r@{\qquad}r@{\qquad}r@{\qquad}r@{}}
\toprule
$j$ & $\phi_j^\circ$ & $j$ & $\phi_j^\circ$\\
\midrule
\endfirsthead
\toprule
$j$ & $\phi_j^\circ$ & $j$ & $\phi_j^\circ$\\
\midrule
\endhead
1 & $1.50795385361929982\times10^{-2}$ & 11 & $8.99107926650920794\times10^{-9}$\\
2 & $5.76689422728655252\times10^{-3}$ & 12 & $2.06853696471111575\times10^{-9}$\\
3 & $1.01557038850826134\times10^{-3}$ & 13 & $4.75778692314195368\times10^{-10}$\\
4 & $2.71368762415077922\times10^{-4}$ & 14 & $1.09440420794361152\times10^{-10}$\\
5 & $5.94916615455076681\times10^{-5}$ & 15 & $2.51724454756957816\times10^{-11}$\\
6 & $1.40376217202844726\times10^{-5}$ & 16 & $5.78995922871089628\times10^{-12}$\\
7 & $3.19906904482506334\times10^{-6}$ & 17 & $1.33169815228079109\times10^{-12}$\\
8 & $7.39433442683568780\times10^{-7}$ & 18 & $3.06233427501141624\times10^{-13}$\\
9 & $1.69810772559526110\times10^{-7}$ & 19 & $7.09000044668921563\times10^{-14}$\\
10 & $3.90984465868492322\times10^{-8}$ & 20 & $1.71384604459278807\times10^{-14}$\\
\bottomrule
\end{longtable}

\section*{Data and code availability}

The computational artefact accompanying this paper, including the independently executable verifier, authenticated source, exact inputs, directed-rounding
audit files, certificate, and reproduction instructions, is available at
\url{https://github.com/sgstepaniants/Berenstein}.

The GitHub repository contains the unpacked certificate package under
\begin{center}
\texttt{berenstein\_validation\_certificate/},
\end{center}
together with \path{REPRODUCTION_REPORT.md} and
\path{PROOF_OVERVIEW.md}.

For archival distribution, the certificate package is available at Zenodo under DOI \texttt{10.5281/zenodo.21865020}. Appendix~\ref{app:certificate} gives the exact inputs, cryptographic identities, reproduction command, and independent verification procedure.

\small
\bibliographystyle{abbrv}
\bibliography{berenstein_counterexample_refs}

\end{document}